\newtheorem{Theorem}{Theorem}[section]
\newtheorem{Lemma}[Theorem]{Lemma}
\newtheorem{Corollary}[Theorem]{Corollary}
\newtheorem{Proposition}[Theorem]{Proposition}
\newtheorem{Remark}[Theorem]{Remark}
\newtheorem{Example}[Theorem]{Example}
\newtheorem{Definition}[Theorem]{Definition}
\newtheorem{Conjecture}[Theorem]{Conjecture}
\def\dim{\operatorname{dim}}
\def\rank{\operatorname{rank}}
\def\degree{\operatorname{deg}}
\def\initial{\operatorname{in}}
\def\lim{\operatorname{lim}}
\newcommand{\pmt}[1]{\begin{pmatrix}#1\end{pmatrix}}
\newcommand{\mt}[1]{\begin{matrix}#1\end{matrix}}
\begin{document}

\title{The initial ideal of generic sequences \\  and Fr\"{o}berg's Conjecture}

\author{Van Duc Trung}
\address{Department of Mathematics, University of Genoa, Via Dodecaneso 35, 16146 Genoa, Italy}
\email{vanductrung@dima.unige.it}

\keywords{Hilbert series, polynomial ring, generic ideal, degree reverse lexicographic order, Gr\"{o}bner basis, initial ideal, semi-regular sequences}
\subjclass[2000]{Primary: 13P10; Secondary: 13D40}

\begin{abstract}
Let $K$ be an infinite field and let $I = (f_1,\cdots,f_r)$ be an ideal in the polynomial ring $R = K[x_1,\cdots,x_n]$ generated by generic   forms  of degrees $d_1,\cdots,d_r$. A longstanding conjecture by Fr\"{o}berg  predicts the shape of the Hilbert function of $R/I.$ In 2010 Pardue  stated  a conjecture on the initial ideal of   $n$ generic forms with respect to the deg-revlex order and he proved that it is equivalent to Fr\"{o}berg's Conjecture.   We study  Pardue's Conjecture and we prove it under  suitable conditions on the degrees of the forms.  This yields a partial solution  to Fr\"{o}berg's Conjecture in the case $r \leq n+2$  over an infinite field of any characteristic.
\end{abstract}

\maketitle


\section{Introduction}
Let $R = K[x_1, \cdots, x_n]$ be the polynomial ring in $n$ variables over an infinite field $K. $   A homogeneous ideal $I $ in $R$ is said to be of type $(n; d_1, \cdots, d_r)$ if  it is  generated by $r$ forms of  degree $d_i $   for $i = 1, \cdots, r. $ The Hilbert function  of $A= R/I$ is by definition $HF_{A}(t) := \dim_K R_t/I_t $ for every $t \ge 0$  and we  are interested in the behavior of the Hilbert function  of  generic   ideals of type  $(n; d_1, \cdots, d_r). $   We adopt  the definition of generic ideals given by Fr\"{o}berg in \cite{F} because it is more suitable for our approach. Assume that   $K$ is an extension of a base field $F$.

\begin{Definition}   \emph{ A form of degree $d$ in $K[x_1, \cdots, x_n]$ is called {\it{generic}}  over $F$ if it is a linear combination of all monomials of degree $d$ and all coefficients are algebraically independent over $F$.
A  homogeneous ideal $(f_1, \cdots, f_r)$ is called generic if all $f_i$ are generic forms with all the coefficients algebraically independent over $F$.}
\end{Definition}

Other definitions  have been given in terms of the affine space parametrized by the coefficients of  the $r$ forms and one says that a property of such sequences is generic if it holds in  a nonempty Zariski open subset of the affine space, see for instance \cite{MS} or \cite{P}. The property ought to hold for a randomly chosen sequence. The Hilbert function of a generic ideal of type $(n;d_1,\cdots,d_n)$ is the Hilbert function of a regular sequence, hence the generating    Hilbert series $HS_A(z) := \sum_{t\ge 0} HF_A(t) z^t  $  is well known and
$HS_A(z) =  \dfrac{\prod_{i=1}^n(1 - z^{d_i})}{(1-z)^n}. $
\vskip0.2cm
For a power series $\sum a_i z^i$ we denote  $\lceil \sum a_i z^i \rceil := \sum b_i z^i$, with $b_i = a_i$ if $a_j > 0$ for all $j \leq i$, and $b_i = 0$ otherwise.
In  1985  Fr\"{o}berg stated the following conjecture:

\begin{Conjecture}\label{Froberg}(Fr\"{o}berg's Conjecture). Let $I = (f_1, \cdots, f_r)$ be a generic homogeneous ideal of type $(n;d_1,\cdots,d_r)$ in $R = K [x_1,\cdots,x_n]$. Then the Hilbert series of $A = R/I$ is given by
$$HS_A(z) = \left\lceil \dfrac{\prod_{i=1}^r(1 - z^{d_i})}{(1-z)^n} \right\rceil $$
\end{Conjecture}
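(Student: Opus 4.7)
Since Fr\"oberg's conjecture has been open since 1985, my aim is the partial result announced in the abstract rather than the full statement. The strategy is two-fold: first compute an initial ideal explicitly in the square case $r=n$, where the Hilbert series of $R/I$ is already known from regular sequence theory but the initial ideal itself is not, and then feed this description into Pardue's 2010 theorem to reach further cases.

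For the initial ideal computation I would follow the effective procedure of Gao, Guan and Volny. The degree reverse lexicographic order interacts favourably with the last variable $x_n$, and a generic sequence of $n$ forms is automatically regular, so one can run an incremental reduction modulo the tails of the $f_i$ and track the syzygies step by step: genericity forces all $S$-pair reductions to produce only the ``expected'' leading monomials, so the Gr\"obner basis closes up in a controlled way. The output is an explicit combinatorial description of the monomials belonging to $\initial_{\prec}(f_1,\ldots,f_n)$. I would then invoke Pardue's theorem, which reduces the verification of Fr\"oberg's Hilbert series formula for a given type to an inspection of the initial ideal of a generic sequence with respect to the degree reverse lexicographic order. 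Matching the monomial ideal produced by the Gao--Guan--Volny computation with the extremal ideal prescribed by Pardue's criterion then yields Fr\"oberg's formula for those types that his reduction propagates from the $r=n$ data, which is the partial solution advertised in the abstract.

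The main obstacle, and the reason the conjecture remains open in general, is that once $r>n$ the sequence is no longer regular: unexpected syzygies enter the Gao--Guan--Volny iteration and no closed combinatorial description of $\initial_{\prec}(I)$ is currently available. The delicate point is that the ``expected'' behaviour of leading monomials which genericity guarantees in the regular case is exactly what breaks down once the codimension cap is passed, so the incremental argument cannot be pushed through uniformly. My plan therefore covers only those types reachable from the $r=n$ initial ideal together with Pardue's reduction; extending it to arbitrary $r$ would require a fundamentally new input into the syzygy analysis, which is beyond the scope of this proposal.
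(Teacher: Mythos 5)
Your overall plan matches the paper's: compute the degrevlex initial ideal of a generic complete intersection $(r=n)$ by the Gao--Guan--Volny incremental method, then feed it into Pardue's 2010 theorem, which shows Fr\"oberg's Conjecture is equivalent to a combinatorial condition on that initial ideal (Conjecture~\ref{Pardue E}, equivalently Conjecture~\ref{Pardue C} on semi-regularity of the variables). The two-stage decomposition and the choice of tools are exactly right, and the first two paragraphs of your proposal could stand for a summary of the paper's strategy.

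Your final paragraph, however, misplaces the bottleneck. You identify the obstruction as passing to $r>n$, where the sequence is no longer regular and ``unexpected syzygies enter the Gao--Guan--Volny iteration.'' But Pardue's reduction handles arbitrary $r>n$ uniformly, with no new syzygy input required: one lifts to $R' = K[x_1,\ldots,x_r]$, takes a generic complete intersection $(f_1',\ldots,f_r')$ of the same degrees, and reduces Fr\"oberg's formula for $R/(f_1,\ldots,f_r)$ to semi-regularity of $x_r,\ldots,x_{n+1}$ on $R'/(f_1',\ldots,f_r')$ via Proposition~\ref{semi-regular} --- precisely what Conjecture~\ref{Pardue C} asserts and what Theorem~\ref{Partial FB} exploits. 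The genuine limitation of the method lives entirely inside the $r=n$ case: the incremental description in Proposition~\ref{Structure initial} produces new generator families $\mathbf{B}_{d+i}^{S_i}$ for $1\le i\le i^*$ whose index sets $S_i$ are not pinned down explicitly (indeed, the paper exhibits a counterexample to the natural guess that $S_i = [1,a_i]$). Property~$P$ can only be certified for those generators when $d \geq \sigma$, because then every monomial of $B_{d+1},\ldots,B_{d+i^*}$ is already divisible by $x_n$, so membership in $S_i$ is irrelevant. This is the origin of the degree hypothesis $d_i \geq \sigma_{i-1}$ in Theorems~\ref{Partial Pardue E} and~\ref{Partial FB}. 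Progress would therefore require a sharper understanding of the sets $S_i$ when $d < \sigma$, not a new syzygy analysis for $r>n$.
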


This problem is of central interest in commutative algebra in the last decades   and a great  deal was done (see for instance Anick \cite{A},  Fr\"{o}berg \cite{F}, Fr\"{o}berg-Hollman \cite{FH}, Fr\"{o}berg-L\"{o}fwall \cite{FLo}, Fr\"{o}berg-Lundqvist \cite{FLu}, Moreno-Soc\'{\i}as \cite{MS}, Pardue \cite{P}, Stanley \cite{S}, Valla \cite{V}).  A  large number of validations through  computational methods suggests  a positive answer.  Fr\"{o}berg's Conjecture is clearly  true if $r \leq n$ (complete intersections); it is known if $n \leq 2$ \cite{F}; $n=3$ \cite{A}; $r  = n+1$ (almost complete intersection) with  char$K=0$ \cite{S1}; and some further special cases when all $d_i$ are equal (see \cite{BO}, \cite{FH}, \cite{N}). Our study will contribute to give a new partial solution to Fr\"{o}berg's Conjecture in the cases $r = n+1, n+2$ for any characteristic under a suitable condition on $d_1, \cdots, d_r $    (see Theorem \ref{Partial FB}).   Actually our main goal is to prove an equivalent conjecture stated by Pardue which will be presented below.

Denote by $\initial_{\tau}(I)$  the initial ideal of $I$ with respect to a term order $\tau$ on $R$.  Because the Hilbert series of $R/I$ and of $R/\initial_{\tau}(I) $ coincide  for every $\tau$, a rich literature has been developed with the  aim to characterize  the initial ideal of generic ideals with respect to suitable term orders (see \cite{AJL}, \cite{A}, \cite{JS}, \cite{CP}, \cite{C}, \cite{HW}, \cite{MS1}, \cite{MS}, \cite{P}). From now on,  the initial ideal of $I$  will be  always with respect to the {\it{degree reverse lexicographic order}}  and it will be denoted simply by $\initial(I).$  For general facts and properties on the degree reverse lexicographic order see for instance \cite[Proposition 15.2]{E}.      It is natural to guess that generic complete intersections share {\it{special}}  initial ideals.      We present here Pardue's Conjecture.

 For every monomial $x^{\alpha} \in K[x_1, \cdots , x_n]$, denote by $\max(x^{\alpha})$ the largest index $i$ such that $x_i$ divides $x^{\alpha}$. Pardue stated a conjecture on the initial ideal of a generic homogeneous ideal of type $(n;d_1,\cdots,d_n)$ in $R = K [x_1,\cdots,x_n]$ which is equivalent to Fr\"{o}berg's Conjecture (\cite[Theorem 2]{P}).
\begin{Conjecture}\label{Pardue E} \cite[Conjecture E]{P} Let $I = (f_1,\cdots,f_n)$ be a generic homogeneous ideal of type $(n;d_1,\cdots,d_n)$ in $K [x_1,\cdots,x_n]$. If $x^\mu$ is a minimal generator of $\initial(I)$ with $\max(x^\mu) = m$ and $\deg(x^\mu)=d$, then every monomial of degree $d$ in the variables $x_1,\cdots,x_{m-1}$ must be in $\initial(I)$ as well.
\end{Conjecture}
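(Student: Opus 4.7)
The plan is to show directly that, under Moreno-Socías' Conjecture, every monomial of degree $d := \deg(x^\mu)$ in $x_1,\dots,x_{m-1}$ is strictly greater than $x^\mu$ in the degree reverse lexicographic order; then the conclusion of Pardue's Conjecture E follows by a single application of Conjecture \ref{Moreno}.

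First I would recall the standard characterization of the degree reverse lexicographic order (see \cite[Proposition 15.2]{E}): for two monomials $x^\alpha, x^\beta$ of the same total degree, one has $x^\alpha >_{\text{drl}} x^\beta$ if and only if the rightmost nonzero entry of the vector $\alpha-\beta$ is negative. This characterization is what makes the argument essentially a bookkeeping check on the exponent vectors.

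Next, let $x^\mu$ be a minimal generator of $\initial(I)$ with $\max(x^\mu)=m$. By definition of $\max$, we have $\mu_m\ge 1$ and $\mu_{m+1}=\mu_{m+2}=\cdots=\mu_n=0$. Let $x^\nu$ be any monomial of degree $d$ involving only the variables $x_1,\dots,x_{m-1}$, i.e.\ $\nu_m=\nu_{m+1}=\cdots=\nu_n=0$. The difference $\nu-\mu$ has coordinates $0$ in positions $m+1,\dots,n$ and coordinate $-\mu_m<0$ in position $m$, so its rightmost nonzero entry is negative. Hence $x^\nu>_{\text{drl}}x^\mu$.

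Finally I would apply Conjecture \ref{Moreno}: since $x^\mu$ is a minimal generator of $\initial(I)$ and $x^\nu$ has the same degree as $x^\mu$ but is larger in the degree reverse lexicographic order, $x^\nu\in\initial(I)$. As $x^\nu$ was an arbitrary monomial of degree $d$ in $x_1,\dots,x_{m-1}$, this is exactly the conclusion of Conjecture \ref{Pardue E}. There is no real obstacle here; the only subtlety is keeping straight the sign convention in the definition of the degree reverse lexicographic order, which is precisely why the conclusion concerns the variables with index \emph{strictly less than} $m$ rather than all variables different from $x_m$.
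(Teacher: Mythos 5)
Your degrevlex bookkeeping is correct: for $x^\nu$ of the same degree as $x^\mu$ supported on $x_1,\dots,x_{m-1}$, the rightmost nonzero entry of $\nu-\mu$ is $-\mu_m<0$, so $x^\nu>_{\mathrm{drl}}x^\mu$, and Moreno-Soc\'{\i}as' Conjecture \ref{Moreno} then places $x^\nu$ in $\initial(I)$. This is precisely the implication the paper describes as ``easy to see'' in the sentence immediately preceding Conjecture \ref{Pardue E}; be aware it is a reduction to Conjecture \ref{Moreno}, not an unconditional proof of the (open) Conjecture \ref{Pardue E}, whose partial cases the paper instead establishes later via the incremental Gr\"obner-basis method of Section 3.
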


Actually in  \cite{MS1}  Moreno-Soc\'{\i}as  stated   a stronger conjecture announcing that $\initial(I)$ should be almost reverse lexicographic, i.e, if $x^\mu$ is a minimal generator of $\initial(I)$ then every monomial of the same degree and greater than $x^\mu$ must be in $\initial(I)$ as well. Moreno-Soc\'{\i}as' Conjecture was proven  in the case $n=2$ by Aguire et al. \cite{AJL} and Moreno-Soc\'{\i}as \cite{MS}, $n=3$ by Cimpoeas \cite{C}, $n=4$ by Harima and Wachi \cite{HW} and for certain sequences $d_1, \cdots, d_n$   by Cho and Park assuming char$K=0 $ \cite{CP}. Without restriction on the characteristic of $K, $ by using an incremental method introduced in  \cite{GGV}, Capaverde and Gao improved the result of Cho and Park, see \cite[Theorem 3.19]{JS}.

Inspired by the incremental method by Capaverde and Gao, in Proposition \ref{Structure initial}, we give an explicit  description of  the initial ideal of generic ideals with respect to the degree reverse lexicographic order. From this description, we obtained a partial solution to Conjecture \ref{Pardue E} under  suitable conditions on the degree of the forms. We hope that this approach will be successfully applied to give new insights in proving Pardue's Conjecture and hence Fr\"{o}berg's Conjecture.
In this paper we give a proof of Conjecture \ref{Pardue E} in case $n \leq 3, $ see Theorem \ref{Partial Pardue E3}. If $n \geq 4$, we prove Conjecture \ref{Pardue E} under the following condition on $d_1,\cdots,d_n$.  Let $d_1 \leq \cdots \leq d_n$ and for every $1 \leq i \leq n$, we set
\vskip 0.3cm
\hspace{120pt} $\delta_i = d_1 + \cdots + d_{i} - i,$
\vskip 0.3cm
\hspace{120pt} $\sigma_i = \min \left\{ \delta_{i-1} , \left\lfloor \dfrac{\delta_i}{2} \right\rfloor \right\}$ for all $i \geq 2$.
\vskip 0.3cm
In Theorem \ref{Partial Pardue E}, we prove the following crucial result which requires a technical part on the structure of the initial ideal of a generic complete intersection.
\vskip 0.2cm
\noindent \textbf{Theorem 1.} \emph{Let $I = (f_1,\cdots,f_n)$ be a generic homogeneous ideal of type $(n;d_1,\cdots,d_n)$ in $K [x_1,\cdots,x_n]$ with $n \geq 4$ and $d_1 \leq \cdots \leq d_n$. If $d_i \geq \sigma_{i-1}$ for all $4 \leq i \leq n$, then Conjecture \ref{Pardue E} is true for $I$.}
 \vskip 0.2cm
As  application,  in Theorem \ref{Partial FB}, we prove the following result.
\vskip 0.3cm
\noindent \textbf{Theorem 2.} \emph{Let $I = (f_1,\cdots,f_r)$ be a generic homogeneous ideal of type $(n;d_1,\cdots,d_r)$ in $R = K [x_1,\cdots,x_n]$ with $r \leq n+2$ and $d_1 \leq \cdots \leq d_r$. If $r \leq 3$ or $r \geq 4$ and $d_i \geq \sigma_{i-1}$ for every $4 \leq i \leq r$, then the Hilbert series of $R/I$ is given by}
$$HS_{R/I}(z) =  \left\lceil \dfrac{\prod_{i=1}^r(1 - z^{d_i})}{(1-z)^n} \right\rceil.$$

All the computations in this paper have been performed by using Macaulay2 \cite{GS}.


\section{Preliminaries}
Let $R' = K[x_1,\cdots,x_n,z]$ be the polynomial ring in $n+1$ variables and fix the order on the variables $x_1 > \cdots > x_n > z$. Let $f_1,\cdots,f_n$ and $g$ be generic forms in $R'$ such that $I=(f_1,\cdots,f_n)$ is a generic homogeneous ideal of type $(n+1;d_1,\cdots,d_n)$ and $(I,g) = (f_1,\cdots,f_n,g)$ is a homogeneous ideal of type $(n+1;d_1,\cdots,d_n,d)$. Define $\pi: R' \longrightarrow R = K[x_1,\cdots,x_n]$ to be the ring homomorphism where  $z $ goes to zero, fixing the elements in $K$ and the variables $x_1,\cdots,x_n$. Let $J = \pi(I)$ be the image of $I$. Then $J$ is a generic homogeneous ideal of type $(n;d_1,\cdots,d_n)$ in $R$. Recall that we always consider  the degree reverse lexicographic order.
\begin{Proposition}
$\initial(I)$ and \ $\initial(J)$ have the same minimal generators.
\end{Proposition}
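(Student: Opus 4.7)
The key observation is that $z$ is the smallest variable in the degrevlex order, so it interacts particularly well with initial ideals. My strategy is to show first that $z$ is a non-zerodivisor modulo $I$, which will force every minimal generator of $\initial(I)$ to be a monomial in $x_1,\cdots,x_n$, and then to match those generators directly with a Gr\"obner basis of $J$.

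\textbf{Step 1: $z$ is regular on $R'/I$.} Since $f_1,\cdots,f_n$ are generic homogeneous forms of positive degree in the $(n+1)$-variable polynomial ring $R'$, they form a regular sequence, and $R'/I$ is Cohen--Macaulay of dimension $1$. Setting $z=0$ gives $R'/(I,z)\cong R/J$, and $J$ is a generic complete intersection in the $n$-variable ring $R$, hence $R/J$ is Artinian. Thus $z$ is a system of parameters modulo $I$, and by Cohen--Macaulayness it is a non-zerodivisor. The standard compatibility of the degrevlex order with its smallest variable (see e.g.\ \cite[Ch.~15]{E}) then gives $\initial(I):z=\initial(I)$, and since $\initial(I)$ is a monomial ideal this is equivalent to saying that no minimal generator of $\initial(I)$ is divisible by $z$.

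\textbf{Step 2: Match up with $\initial(J)$.} I would next invoke the degrevlex identity $\initial(I+(z)) = \initial(I)+(z)$. Let $G=\{g_1,\cdots,g_s\}\subset R$ be a minimal Gr\"obner basis of $J$. Since $J=\pi(I)$, every $g_i$ lifts to an element of $I+(z)$; conversely every $f\in I$ satisfies $\pi(f)\in J=(g_1,\cdots,g_s)$, so $f\in (g_1,\cdots,g_s,z)$. Hence $I+(z)=(g_1,\cdots,g_s,z)$. The S-pairs $S(g_i,g_j)$ already reduce to zero in $R$ and hence in $R'$, while the S-pairs $S(g_i,z)$ vanish by the coprime criterion because $\initial(g_i)$ involves no $z$. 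Therefore $\{g_1,\cdots,g_s,z\}$ is a Gr\"obner basis of $I+(z)$ in $R'$, so $\initial(I+(z))=(\initial(g_1),\cdots,\initial(g_s),z)$. Comparing this with $\initial(I)+(z)$ and using the uniqueness of the minimal monomial generating set yields that the minimal generators of $\initial(I)$ coincide with those of $\initial(J)=(\initial(g_1),\cdots,\initial(g_s))$.

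\textbf{Main obstacle.} All individual ingredients are classical: regularity of positive-degree generic forms, the compatibility of degrevlex with the smallest variable, and the coprime S-pair criterion. The most delicate point is the Cohen--Macaulay/dimension argument in Step~1, which hinges on the fact that $f_1,\cdots,f_n$ form a regular sequence in $R'$ and that $J$ has codimension $n$ in $R$; given these, the rest of the argument is bookkeeping with monomial generators.
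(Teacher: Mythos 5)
Your proof is correct and follows essentially the same route as the paper: both rest on (i) $z$ being regular on $R'/I$, which by the degrevlex compatibility with the smallest variable (Eisenbud, Theorems~15.13--15.14) forces every minimal generator of $\initial(I)$ to be $z$-free, and (ii) the degrevlex identity $\pi(\initial(I))=\initial(\pi(I))$ (Eisenbud, Proposition~15.12), which you rederive via a coprime-$S$-pair argument rather than citing it. The only difference is that you supply the Cohen--Macaulay/system-of-parameters justification for the regularity of~$z$ and unfold the Gr\"obner-basis proof that the paper delegates to Eisenbud; the skeleton of the argument is identical.
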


\begin{proof}
From a property of the  degree reverse lexicographic order in \cite[Proposition 15.12]{E}, we get $\pi(\initial(I)) = \initial(J)$. On the other hand, since $z$ is regular in $R'/I$, by \cite[Theorem 15.13]{E} $z$ is regular in $R'/\initial(I)$. Furthermore, by \cite[Theorem 15.14]{E} the minimal generators of $\initial(I)$ are not divisible by $z$. Thus, $\initial(I)$ and \ $\initial(J)$ have the same minimal generators.
\end{proof}
Let $B = B(J)$, which is called the set of standard monomials with respect to $J$, be the set of monomials in $R$ that are not in $\initial(J)$. We set
\vskip 0.3cm
\hspace{120pt} $\delta = \delta_n = d_1 + \cdots + d_n - n,$
\vskip 0.3cm
\hspace{120pt} $\sigma = \sigma_n = \min \left\{ \delta_{n-1} , \left\lfloor \dfrac{\delta_n}{2} \right\rfloor \right\}.$
\vskip 0.3cm
It is known that $A = R/J$ is an Artinian complete intersection and the Hilbert series of $A$, say $HS_A(z) = \sum_{i = 0}^{\delta} a_i z^i,$ is a symmetric polynomial of degree $\delta$ (i.e., $a_i = a_{\delta-i}, \forall  i$), with
$$0 < a_0 < a_1 < \cdots < a_{\sigma} = \cdots = a_{\delta - \sigma} > \cdots > a_{\delta - 1} > a_\delta > 0,$$
(see for instance   \cite[Proposition 2.2]{MS}). Notice that  $a_i = | B_i |$ where $B_i$ is the set of monomials of degree $i$ in $B$.
\vskip 0.3cm
The set of standard monomials with respect to a generic ideal of type $(n;d_1,\cdots,d_r)$ depends only on $(n; d_1, \cdots, d_r)$ and we will denote it by $B(n;d_1, \cdots, d_r)$. We will describe more clearly the set of standard monomials in the case $r=n, $  that is $B=B(n;d_1, \cdots, d_n).$ For each $ 1 \leq i \leq \sigma$, define
$$\widetilde{B}_i = \{ x^\alpha \in B_i  \ | \ \max(x^\alpha) < n \}.$$
\vskip 0.3cm
\begin{Proposition} \label{Structure B} The structure of $B = B(n;d_1, \cdots, d_n)$ is as follows,
\vskip 0.3cm
{\rm (1)} $B_i = \widetilde{B}_i  \cup  x_n \widetilde{B}_{i-1} \cup \cdots \cup x_n^{i-1}\widetilde{B}_1 \cup \{ x_n^i \}$
\vskip 0.3cm
\hspace*{30pt} $= \widetilde{B}_i \cup x_nB_{i-1},$ for all $1 \leq i \leq \sigma$.
\vskip 0.3cm
{\rm (2)} $B_{\sigma+i} = x_n^i \widetilde{B}_{\sigma}  \cup  x_n^{i+1} \widetilde{B}_{\sigma-1} \cup \cdots \cup x_n^{\sigma +i-1}\widetilde{B}_1 \cup \{ x_n^{\sigma+i} \}$
\vskip 0.3cm
\hspace*{42pt} $= x_n^iB_{\sigma}$, for all $0 \leq i \leq \delta - 2\sigma$.
\vskip 0.3cm
{\rm (3)} $B_{\delta-i} = x_n^{\delta - 2i} \widetilde{B}_{i}  \cup  x_n^{\delta -2i+1} \widetilde{B}_{i-1} \cup \cdots \cup x_n^{\delta -i-1}\widetilde{B}_1 \cup \{ x_n^{\delta-i} \}$
\vskip 0.3cm
\hspace*{42pt} $= x_n^{\delta - 2i}B_i$, for all $ 0 \leq i \leq \sigma$.
\end{Proposition}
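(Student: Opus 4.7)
The three parts describe $B_k$ in the low range ($k\le\sigma$), middle range ($\sigma\le k\le\delta-\sigma$), and upper range ($\delta-\sigma\le k\le\delta$) respectively. Within each part the two displayed expressions are equivalent by iterated substitution of the compact formula at a lower index (expanding $B_{i-1}$, $B_\sigma$, or $B_i$ on the right-hand side), so my plan is to establish the three compact identities
\[
B_i=\widetilde B_i^{\,0}\cup x_nB_{i-1},\qquad B_{\sigma+i}=x_n^iB_\sigma,\qquad B_{\delta-i}=x_n^{\delta-2i}B_i,
\]
each in its prescribed range of $i$, and then induct on $i$.

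The whole argument reduces to one technical fact: the multiplication map $\mu_{x_n}\colon (R/J)_t\to (R/J)_{t+1}$ has maximal rank in every degree. Granting this, and recalling the degrevlex equality $\initial(J+(x_n))=\initial(J)+(x_n)$ which transfers the rank data from $R/J$ to $R/\initial(J)$, the three parts proceed as follows. In part (1) the inclusion ``$\subseteq$'' is immediate because $\initial(J)$ is a monomial ideal, so any $m\in B_i$ with $x_n\mid m$ satisfies $m/x_n\in B_{i-1}$; the reverse inclusion $x_nB_{i-1}\subseteq B_i$ is precisely injectivity of $\mu_{x_n}$ on $(R/\initial(J))_{i-1}$, which holds for $i-1<\sigma$ via the strict increase $a_0<a_1<\cdots<a_\sigma$ combined with maximal rank. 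In part (2), the inclusion $x_n^iB_\sigma\subseteq B_{\sigma+i}$ is bijectivity across the flat middle piece $a_\sigma=\cdots=a_{\delta-\sigma}$, and the reverse inclusion is then forced by the dimension equality $|B_{\sigma+i}|=a_{\sigma+i}=a_\sigma=|x_n^iB_\sigma|$. Part (3) follows either from an analogous surjectivity-plus-dimension argument in the upper range or, more cleanly, from Gorenstein duality of $R/J$ combined with part (1).

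The main obstacle is establishing maximal rank of $\mu_{x_n}$ on $R/J$: because $x_n$ is a fixed coordinate and not a generic linear form, Pardue's semi-regularity of the sequence $f_1,\dots,f_n$ does not apply to $x_n$ directly. My strategy is to treat ``$\mu_{x_n}$ has maximal rank in every degree'' as a Zariski-open condition on the coefficient parameters of the $f_i$'s: it amounts to non-vanishing of certain subdeterminants of the multiplication matrices and is therefore either identically false or holds on a nonempty open locus. Non-emptiness I would verify by an explicit specialization, for instance applying a generic $K$-linear change of coordinates to the monomial complete intersection $(x_1^{d_1},\dots,x_n^{d_n})$ so that in the new coordinates $x_n$ pulls back to a generic Lefschetz form on the original monomial ring; when the characteristic obstructs a direct Lefschetz argument, I would fall back on the incremental Gao--Guan--Volny construction (Proposition~\ref{Structure initial}), which builds the initial ideal and the $x_n$-multiplication structure degree by degree. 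Algebraic independence of the coefficients of the $f_i$ over $F$ then lands our generic $J$ inside the open locus, after which the inductions on $i$ in each of the three parts are routine.
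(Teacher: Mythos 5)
Your reduction of the proposition to Lefschetz-type maximal-rank properties of multiplication by $x_n$ on $R/J$ is the right idea conceptually, and your handling of part (1) --- injectivity of $\mu_{x_n}$ plus the fact that in degrevlex any $x_n$-divisible monomial of a given degree is smaller than any $x_n$-free one --- is correct and essentially what \cite[Lemma 3.5]{JS} encodes. The paper's own proof is short precisely because it delegates the hard content to \cite[Lemma 3.4]{JS} and \cite[Lemma 3.5]{JS}: given those, part (1) is your divisor-closure argument, part (2) is immediate, and part (3) is a single citation. So you are attempting to re-derive the Capaverde--Gao lemmas from scratch, which is a genuinely different route, but it has two real gaps.

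First, maximal rank of $\mu_{x_n}$ in every single degree (a WLP-type statement) is \emph{not} enough for part (3). When $i<\sigma$, iterated single-step surjectivity only shows that every monomial of $B_{\delta-i}$ is divisible by $x_n^{\delta-i-\sigma}$, not by $x_n^{\delta-2i}$; in Example \ref{ex structure B} with $\delta=8$, $\sigma=4$, $i=3$ this is the difference between $x_4^1\mid B_5$ and the required $x_4^2\mid B_5$. What you need is maximal rank of the \emph{iterated} maps $\mu_{x_n}^{\delta-2i}\colon A_i\to A_{\delta-i}$ (an SLP-type statement), and WLP does not imply SLP; nor does Gorenstein duality rescue the composition injective-then-surjective into a bijection. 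Your ``surjectivity-plus-dimension'' and ``Gorenstein duality plus part (1)'' sketches therefore do not close part (3).

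Second, your mechanism for establishing even the WLP claim is restricted to characteristic $0$: the specialization witness is a monomial complete intersection together with Stanley--Watanabe Lefschetz, which can fail in characteristic $p$. You acknowledge this and propose to ``fall back on the incremental Gao--Guan--Volny construction (Proposition \ref{Structure initial})'', but that is circular --- the proof of Proposition \ref{Structure initial} explicitly uses the decomposition $\mathbf{B}_i = \widetilde{\mathbf{B}}_i^0 \cup x_n\widetilde{\mathbf{B}}_{i-1}^0\cup\cdots$ from Proposition \ref{Structure B}(1), so you cannot invoke it to prove Proposition \ref{Structure B}. The characteristic-free content you need really does live in \cite[Lemmas 3.4, 3.5]{JS}, where it is obtained by running the incremental argument one variable lower (building $B(n;d_1,\dots,d_n)$ from $B(n-1;d_1,\dots,d_{n-1})$), not by a semicontinuity-plus-witness argument over the monomial case.
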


\begin{proof}
(1) For $1 \leq i \leq \sigma$, we have $a_{i-1} < a_i$. Let $S$ denote the subset of $B_i$ consisting of the $a_{i-1}$ smallest monomials in $B_i$ with respect to the degree reverse lexicographic order . By \cite[Lemma 3.5]{JS} we get $S = x_nB_{i-1}$. It is clear that $\widetilde{B}_i \subseteq B_i \setminus S$. Conversely,  assume that for some  monomial $x^\alpha \in B_i \setminus S  $  and $x^\alpha \notin \widetilde{B}_i$. Then $x_n$ divides $x^\alpha$, so that $x^\alpha / x_n \in B_{i-1}$. This implies a contradiction since $x^\alpha  = x_n(x^\alpha / x_n) \in S$. Thus, $\widetilde{B}_i = B_i \setminus S$ and (1) holds.
\vskip 0.3cm
\noindent (2) For $0 \leq i \leq \delta - 2\sigma$, we have $a_{\sigma+i} = a_\sigma$. By \cite[Lemma 3.5]{JS} we get $B_{\sigma+i} = x_n^iB_{\sigma}$.
\vskip 0.3cm
\noindent (3) Since $\sigma \leq \dfrac{\delta}{2}$, by \cite[Lemma 3.4]{JS} we get $B_{\delta-i} = x_n^{\delta - 2i}B_i$  for $0 \leq i \leq \sigma$.
\end{proof}

\begin{Remark} \emph{(1) $B=B(n;d_1,\cdots,d_n)$  is determined by $\widetilde{B}_1,  \cdots, \widetilde{B}_{\sigma}$.
\vskip 0.3cm
\noindent (2) $|\widetilde{B}_i| = a_i - a_{i-1} = a'_i > 0$, for all  $1 \leq i \leq \sigma$.
\vskip 0.3cm
\noindent (3) For $\sigma < i \leq \delta$ the monomials in $B_i$ are divisible by $x_n$.}
\end{Remark}
In the following example, we describe  explicitly $B(n; d_1, \cdots, d_n)$ according to Proposition \ref{Structure B}.
\begin{Example} \label{ex structure B}
\emph{Let $B = B(4;2,3,3,4)$ be the set of standard monomials with respect to a generic ideal of type $(4;2,3,3,4)$. Then $\delta = 8; \sigma=4$. Denote by $a_i = |B_i|$ and $a'_i = |\widetilde{B}_i|$. We have
\begin{align*}
&B_0 = \{ 1 \}  & a_0 = 1.\\
&B_1 = \widetilde{B}_1 \cup \{ x_4 \} \ \text{where} \ \widetilde{B}_1 = \{ x_1, x_2, x_3 \} &  a'_1 = 3, a_1 = 4.\\
&B_2 = \widetilde{B}_2 \cup x_4B_1 \ \text{where} \ \widetilde{B}_2 = \{ x_1x_2, x_2^2, x_1x_3, x_2x_3, x_3^2 \} & a'_2 = 5, a_2 = 9.\\
&B_3 = \widetilde{B}_3 \cup x_4B_2 \ \text{where} \ \widetilde{B}_3 = \{ x_1x_2x_3, x_2^2x_3, x_1x_3^2, x_2x_3^2, x_3^3 \} &  a'_3 = 5, a_3 = 14.\\
&B_4 = \widetilde{B}_4 \cup x_4B_3 \ \text{where} \ \widetilde{B}_4 = \{ x_2x_3^3,  x_3^4 \} &  a'_4 = 2, a_4 = 16.\\
&B_5 = x_4^2B_3 &  a_5 = a_3 = 14.\\
&B_6 = x_4^4B_2 &  a_6 = a_2 = 9.\\
&B_7 = x_4^6B_1 & a_7 = a_1 = 4.\\
&B_8 = x_4^8B_0 &  a_8 = a_0 = 1.
\end{align*}}
\end{Example}
In the next section, we will use the above example   to construct $B(n+1, d_1, \cdots, d_{n+1})$ starting from $B(n, d_1, \cdots, d_n)$ by using the incremental method introduced in \cite{GGV} and adapted to our situation in \cite{JS}.

\section{Main results}
Let $(I,g) = (f_1,\cdots,f_n,g)$ be a generic homogeneous ideal of type $(n+1;d_1,\cdots,d_n,d)$ in $R' = K[x_1,\cdots,x_n,z]$. Define $C_I$ to be the set of the coefficients of the polynomials $f_1,\cdots,f_n$ and $\bar{F} = F(C_I) \subset K$, where $K$ is an extension of a base field $F$. Let $G = \{ g_1, \cdots,g_t \}$ be the reduced Gr\"{o}bner basis of $I$ with respect to the degree reverse lexicographic order.  Then $g_1, \cdots, g_t$ are homogeneous polynomials in $\bar{F}[x_1,\cdots,x_n,z]$.
\vskip 0.2cm
Let $E = B(n+1, d_1, \cdots, d_n)$ be the set of  standard monomials with respect to $I$. Reducing $g$ modulo $G$ we obtain a polynomial which is a $K$-linear combination of all monomials of degree $d$ in $E$ with coefficients still algebraically independent over $\bar{F}$. Hence, from now on we assume that $g$ is reduced modulo $G$ and the coefficients of $g$ are algebraically independent over $\bar{F}$.
\vskip 0.2cm
In order to construct $B(n+1,d_1,\cdots,d_{n+1})$ from $B(n,d_1,\cdots,d_n)$ we need to compare $\initial(I,g)$ and $\initial(I)$. We recall here the incremental method to construct $\initial(I,g)$ from $\initial(I)$; for more details see \cite[Section 3]{JS}.  Let $B = B(n; d_1, \cdots, d_n)$ be the set of  standard monomials with respect to $J = \pi(I)$. For every $i \geq 0$, denote by $E_i$ the set of monomials of degree $i$ in $E$.  Note that, for $0 \leq i \leq \delta$, we have
$$E_i = B_i \cup zB_{i-1} \cup z^2B_{i-2} \cup \cdots \cup z^{i-1}B_1 \cup z^iB_0,$$
\noindent and for $i > \delta$, we have $E_i = z^{i - \delta}E_\delta.$
\vskip 0.2cm
Let $0 \leq i \leq \delta$, denote by $\mathbf{E}_i$  the column vector whose entries are the monomials in $E_i$ listed in decreasing order with respect to the degree reverse lexicographic order . For each monomial $x^{\alpha} \in \mathbf{E}_i$, reducing the product $x^{\alpha}g \in R'_{i+d}$ modulo $G$ we obtain a polynomial, say the reduced form of $x^{\alpha}g$, that is a $K$-linear combination of monomials in $E_{i+d}$. Note that each coefficient of the reduced form of $x^{\alpha}g$ is a $\bar{F}$-linear combination of coefficients of polynomial $g$.  Let $M_i$ denote the matrix such that
\begin{equation}\label{Equation}
\mathbf{E}_i.g \equiv M_i\mathbf{E}_{i+d} \ \text{(mod} \ G),
\end{equation}
where $\mathbf{E}_{i+d}$ denotes the column vector whose entries are the monomials in $E_{i+d}$ listed in decreasing order with respect to the degree reverse lexicographic order. Thus, each entry of matrix $M_i$ is a $\bar{F}$-linear combination of coefficients of polynomial $g$. By \cite[Lemma 3.2]{JS} the rows of $M_i$ are linearly independent. This means that $\rank(M_i) = |E_i| = a_i + a_{i-1} + \cdots + a_0$. Furthermore, the monomials in $\mathbf{E}_{i+d}$ corresponding to the $|E_i|$ first linearly independent columns of $M_i$ are the generators that will be added to $\initial(I)$ to form $\initial(I,g)$. Note that some of the monomials we add might be redundant. In this section, we will prove the following  result which  will be fundamental in our approach.

\begin{Theorem}\label{Pardue E with condition}
Let $(I,g) = (f_1,\cdots,f_n,g)$ be a generic homogeneous ideal of type $(n+1;d_1,\cdots,d_n,d)$ in $R' = K [x_1,\cdots,x_n,z]$, where $d_1 \leq \cdots \leq d_n \leq d$. If  $d \geq \sigma$ and Conjecture \ref{Pardue E} is true for $J = \pi(I)$, then Conjecture \ref{Pardue E} is also true for $(I,g)$.
\end{Theorem}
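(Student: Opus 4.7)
The plan is to classify each minimal generator $x^\mu$ of $\initial(I,g)$ (write $N = \deg x^\mu$ and $m = \max(x^\mu)$) as either inherited from $\initial(I)$ or genuinely new, and verify property $P$ separately. For inherited generators the argument is immediate: by the proposition preceding this theorem, $x^\mu$ is then a minimal generator of $\initial(J)$ and $z \nmid x^\mu$, so $m \leq n$, and Conjecture \ref{Pardue E} for $J$ (available by hypothesis) places every monomial of degree $N$ in $x_1,\ldots,x_{m-1}$ inside $\initial(J) \subseteq \initial(I) \subseteq \initial(I,g)$.

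For a genuinely new minimal generator $x^\mu \in \initial(I,g) \setminus \initial(I)$, the incremental method of \cite{GGV,JS} identifies $x^\mu$ as the leading term of a suitable reduction of $x^\alpha g$ modulo $G$ for some $x^\alpha \in E_i$, $i = N-d \geq 0$; in particular $x^\mu \in E_N$. Since $d \geq \sigma$ forces $N \geq \sigma$, Proposition \ref{Structure B} gives tight combinatorial control on $B_N$, and I would split according to whether $z$ divides $x^\mu$, using the decomposition $E_N = B_N \cup zB_{N-1} \cup \cdots \cup z^N B_0$.

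If $m \leq n$, then $x^\mu \in B_N$. When $N > \sigma$, Proposition \ref{Structure B} makes every monomial of $B_N$ divisible by a positive power of $x_n$, forcing $m = n$; any monomial of degree $N$ in $x_1,\ldots,x_{n-1}$ lacks $x_n$, hence lies in $\initial(J) \subseteq \initial(I,g)$ and property $P$ follows. The subcase $N = \sigma$ forces $d = \sigma$ and $i = 0$: at step $0$ exactly one new generator is added, namely the leading term of $g$ reduced modulo $G$, which by the algebraic independence of the coefficients of $g$ is the largest monomial of $E_\sigma$, so any monomial of degree $\sigma$ supported in $x_1,\ldots,x_{m-1}$ strictly exceeds $x^\mu$ in degrevlex and already lies in $\initial(J) \subseteq \initial(I,g)$.

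The remaining case $m = n+1$ (i.e.\ $z \mid x^\mu$) is the main step: property $P$ reduces to showing $B_N \subseteq \initial(I,g)$. The crucial observation is that the monomials of $B_N$ are precisely the $|B_N|$ largest entries of $\mathbf{E}_N$ in degrevlex (any $z$-free monomial exceeds every $z$-involving monomial of the same degree), so they occupy the leftmost columns of $M_i$. Combining the rank identity $\rank M_i = |E_i|$ from \cite[Lemma 3.2]{JS}, the explicit description of $\initial(I)$ in Proposition \ref{Structure initial}, and the algebraic independence of the coefficients of $g$, I would argue that any monomial of $B_N$ not selected as a new generator at step $i$ must already be a multiple of a new generator introduced at a strictly earlier step, yielding $B_N \subseteq \initial(I,g)$. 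This last step is the main obstacle: the hypothesis $d \geq \sigma$ is essential here, as it places $N$ in the flat-or-decreasing part of the Hilbert function of $R/J$ where Proposition \ref{Structure B} expresses $B_N$ as $x_n^{N-\sigma} B_\sigma$ or $x_n^{2N-\delta} B_{\delta - N}$, providing the combinatorial leverage needed to close the argument.
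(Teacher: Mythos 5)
Your high-level decomposition into inherited generators (from $\initial(I)$, equivalently $\initial(J)$), new $z$-free generators, and new $z$-divisible generators matches the paper's structure, and two of the three pieces are handled correctly. The inherited case is exactly the paper's appeal to the hypothesis on $J$. Your analysis of new $z$-free generators is also sound: using $d \geq \sigma$ together with Proposition \ref{Structure B} to force $x_n \mid x^\mu$ for $\deg x^\mu > \sigma$, and treating the boundary $\deg x^\mu = d = \sigma$ via the leading term of the reduced $g$, mirrors how the paper handles $\mathbf{B}_d^{\{1\}}$ and Proposition \ref{Part 1}.

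The gap is exactly where you flag it: the case $z \mid x^\mu$. You correctly reduce property $P$ to the inclusion $B_N \subseteq \initial(I,g)$, but the sentence beginning ``I would argue that any monomial of $B_N$ not selected as a new generator at step $i$ must already be a multiple of a new generator introduced at a strictly earlier step'' is not a proof but a statement of what needs to be shown, and it is the hard part. The rank identity $\rank M_i = |E_i|$ only says \emph{how many} independent columns there are, not \emph{where} they sit, and the observation that $B_N$ occupies the leftmost block does not by itself force that block to contribute its full rank to the selection; in general (for steps $i \leq i^*$) it does \emph{not}, which is precisely why the paper obtains only the partial sets $\mathbf{B}_{d+i}^{S_i}$ there. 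What rescues the argument for $N > d+i^*$ (which is the range producing the $z$-divisible new generators) is that the square block-upper-triangular submatrix $\Lambda_i$ of $M_i$ with diagonal blocks $\Gamma_{j,\delta-j}$ is nonsingular — the paper invokes \cite[Proposition 3.16]{JS} for this — and then Lemma \ref{multiple} propagates full inclusion of $B_{d+i^*}$ (or $B_{d+i^*+1}$, depending on the parity of $\delta-d$) to all $B_{d+i}$ with $i > i^*$. You would need to reconstruct this nonsingularity argument and the propagation step; without them the proof does not close, even with $d \geq \sigma$ in hand, since that inequality enters only through the other two cases.
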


The proof is technical and it needs a deep investigation given in Proposition \ref{Structure initial}, Proposition \ref{Part 2} and Proposition \ref{Part 1}. Let us fix the property stated in   Conjecture \ref{Pardue E}.
\begin{Definition}
\emph{Let $I$ be a homogeneous ideal in $K[x_1,\cdots,x_n]$. Let $x^{\alpha}$ be a monomial in $K[x_1,\cdots,x_n]$ with $\max(x^\alpha) = m$ and $\deg(x^\alpha)=d$. We say $x^\alpha$ satisfies property $P$ with respect to $I$ if every monomial of degree $d$ in the variables $x_1,\cdots,x_{m-1}$ is in the initial ideal of $I$.}
\end{Definition}
Thus, Conjecture \ref{Pardue E} is true for a generic homogeneous ideal $I$ if and only if every minimal generator of $\initial(I)$ satisfies property $P$ with respect to $I$. First of all we notice that Theorem \ref{Pardue E with condition} can be deduced from \cite[Proposition 3.12]{JS} when $d \geq \delta$. Indeed, by \cite[Proposition 3.12]{JS}, if $d \geq \delta$ then
$$\initial(I,g) = (\initial(I), z^{d - \delta}B_{\delta}, z^{d-\delta+2}B_{\delta -1}, \cdots, z^{\delta+d-2}B_1, z^{\delta+d}B_0).$$
Let $x^\mu$ be a generator of $\initial(I,g)$ in $z^{d - \delta}B_{\delta}, z^{d-\delta+2}B_{\delta -1}, \cdots, z^{\delta+d-2}B_1, z^{\delta+d}B_0$. We claim  $x^\mu$ satisfies property $P$ with respect to $(I,g)$. Indeed, if $d > \delta$ then $x^\mu$ is divisible by $z$ and $\degree(x^\mu) = k > \delta$. Hence, every monomial $x^\alpha$ of degree $k$ in variables $x_1, \cdots, x_n$ is not in $E_k = z^{k-\delta}E_\delta$, so that $x^\alpha \in \initial(I) \subset \initial(I,g)$. If $d = \delta$ then
$$\initial(I,g) = (\initial(I),x_n^{\delta}, z^2B_{\delta -1}, \cdots, z^{2\delta-2}B_1, z^{2\delta}B_0).$$
If $x^\mu$ is a monomial in $z^2B_{\delta -1}, \cdots, z^{2\delta-2}B_1, z^{2\delta}B_0$, then $x^\mu$ satisfies property $P$ with respect to $(I,g)$ through an analogous argument as the case $d > \delta$. On the other hand, it is not hard to see that $x_n^\delta$ also satisfies property $P$ with respect to $(I,g)$. Thus, if Conjecture \ref{Pardue E} is true for $J$ with $d \geq \delta$, then every minimal generator of $\initial(I,g)$ satisfies property $P$ with respect to $(I,g)$. This means Conjecture \ref{Pardue E} is true for $(I,g)$.
\vskip 0.3cm
Consider now the case $d < \delta$.  Set $i^*=\lfloor\frac{\delta-d}{2}\rfloor$. The following lemma will be useful for proving Proposition \ref{Structure initial}.
\begin{Lemma}\label{multiple}
For $i > j \geq i^*$, we have $a_{d+i} \leq a_{d+j}$. Furthermore, the monomials of $B_{d+i}$ are multiples of the $a_{d+i}$ smallest monomials in $B_{d+j}$ with respect to the degree reverse lexicographic order.
\end{Lemma}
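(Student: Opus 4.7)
The key observation is that the definition $i^* = \lfloor(\delta-d)/2\rfloor$ is tailored so that the index $d+i^*$ lands in the (weakly) descending tail of the unimodal Hilbert function of $A = R/J$. Once the inequality $d+i^* \geq \delta-\sigma$ is established, both assertions of the lemma follow quickly from Proposition \ref{Structure B}(3).

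First I would verify $d+i^* \geq \delta-\sigma$ by splitting according to which term attains the minimum in $\sigma = \min\{\delta^*, \lfloor \delta/2\rfloor\}$. If $\sigma = \delta^*$, then $\delta-\sigma = d_n - 1$, and the standing hypothesis $d \geq d_n$ in Theorem \ref{Pardue E with condition} gives $d+i^* \geq d \geq d_n > \delta-\sigma$. If instead $\sigma = \lfloor \delta/2\rfloor$, then $\delta-\sigma = \lceil \delta/2\rceil = \lfloor(\delta+1)/2\rfloor$, and since $d \geq 1$ monotonicity of the floor function yields $d+i^* = \lfloor(d+\delta)/2\rfloor \geq \lfloor(\delta+1)/2\rfloor = \delta-\sigma$.

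From this, the first assertion $a_{d+i} \leq a_{d+j}$ for $i > j \geq i^*$ is immediate: $\{a_t\}$ is strictly decreasing on $[\delta-\sigma,\delta]$ and vanishes for $t>\delta$, and we have just shown $d+j \geq d+i^* \geq \delta-\sigma$. For the second assertion, I would apply Proposition \ref{Structure B}(3) with $t = \delta-k$ for $t\in[\delta-\sigma,\delta]$, which rewrites as
$$B_t = \{x_n^t\} \cup x_n^{t-1}\widetilde{B}_1^0 \cup x_n^{t-2}\widetilde{B}_2^0 \cup \cdots \cup x_n^{2t-\delta}\widetilde{B}_{\delta-t}^0.$$
Setting $t_1 = d+i$ and $t_2 = d+j$, every monomial of $B_{t_1}$ has the form $x_n^{t_1-k}\nu$ with $\nu\in\widetilde{B}_k^0$ and $0 \leq k \leq \delta-t_1 < \delta-t_2$. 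Factoring it as $x_n^{t_1-t_2}\cdot(x_n^{t_2-k}\nu)$ exhibits it as a monomial multiple of $x_n^{t_2-k}\nu \in B_{t_2}$. Because in the degree reverse lexicographic order a monomial with higher $x_n$-power is smaller, as $(k,\nu)$ varies these factors range precisely over the $\sum_{k=0}^{\delta-t_1}|\widetilde{B}_k^0| = a_{\delta-t_1} = a_{t_1}$ smallest monomials of $B_{t_2}$.

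The main obstacle is the bookkeeping of Step 1, namely the inequality $d+i^* \geq \delta-\sigma$: it genuinely uses the theorem's hypothesis $d\geq d_n$ in the case $\sigma=\delta^*$ and positivity of $d$ in the case $\sigma = \lfloor\delta/2\rfloor$. Steps 2 and 3 are then routine applications of the structural description already supplied by Proposition \ref{Structure B}.
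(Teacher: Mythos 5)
Your proof is correct, but it follows a genuinely different route from the paper's. The paper's own proof is very short: it first observes $d+i^* = \lfloor(d+\delta)/2\rfloor \geq \lfloor\delta/2\rfloor \geq \sigma$ (needing only $d\geq 0$), which gives the first assertion by unimodality, and then invokes the cited result \cite[Lemma 3.5(ii)]{JS} directly to obtain $B_{d+i} = x_n^{i-j}S$ where $S$ is the set of the $a_{d+i}$ smallest monomials in $B_{d+j}$. You instead re-derive that multiplicative structure from the paper's own Proposition \ref{Structure B}(3), which is a legitimate and more self-contained approach, but it forces you to establish the strictly stronger position inequality $d+i^* \geq \delta-\sigma$ (notice $\delta-\sigma\geq\sigma$, so this is genuinely stronger). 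For that you must split on which term realizes $\sigma$ and, in the branch $\sigma=\delta^*$, invoke the standing hypothesis $d\geq d_n$ of Theorem \ref{Pardue E with condition} — a hypothesis that the paper's own proof of this lemma never needs. Both proofs ultimately rest on the same facts from \cite{JS} (your Proposition \ref{Structure B}(3) is itself proved via \cite[Lemmas 3.4 and 3.5]{JS}), so the dependency is equivalent; the trade-off is that your version is longer and uses more of the surrounding hypotheses, in exchange for making the combinatorics of the factorization $x_n^{t_1-k}\nu = x_n^{t_1-t_2}\cdot(x_n^{t_2-k}\nu)$ fully explicit rather than delegating it to the citation.
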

\begin{proof}
Since $d+i^* \geq  \lfloor\frac{\delta}{2}\rfloor \geq \sigma$ , so  $d+i > d+j \geq \sigma$. Hence, $a_{d+i} \leq a_{d+j}$. Let $S$ denote the subset of $B_{d+j}$ consisting of the $a_{d+i}$ smallest monomials in $B_{d+j}$ with respect to the degree reverse lexicographic order. By \cite[Lemma 3.5 (ii)]{JS}  we have $B_{d+i} = x_n^{i-j}S.$
\end{proof}

By convention, we use the following notation. Let $B$ be a finite subset of monomials in $R = K[x_1,\cdots,x_n]$ and denote by $\mathbf{B} = \{ x^{\alpha_1}, x^{\alpha_2}, \cdots, x^{\alpha_m} \}$ the set of monomials in $B$ listed in decreasing order with respect to the degree reverse lexicographic order. Let $S$ be a subset of $\{1, 2, \cdots, m\}$ and denote $\mathbf{B}^S = \{ x^{\alpha_i} \in B \ | \ i \in S \}$. The set of generators of $\initial(I,g)$ can be described  as the following.
\begin{Proposition}\label{Structure initial}
Let $(I,g) = (f_1,\cdots,f_n,g)$ be a generic ideal of type $(n+1;d_1,\cdots,d_n,d)$ in $R' = K [x_1,\cdots,x_n,z]$, where $d_1 \leq \cdots \leq d_n \leq d$ and $d < \delta$. Let $B = B(n;d_1,\cdots,d_n)$ be the set of  standard monomials with respect to $J = \pi(I)$.
\vskip 0.3cm
\noindent {\rm (1)} If $\delta - d  = 2k$, where $k$ is a positive integer, then
\begin{center}
$\initial(I,g) = (\initial(I), \mathbf{B}_d^{\{1\}}, \mathbf{B}_{d+1}^{S_1}, \mathbf{B}_{d+2}^{S_2}, \cdots, \mathbf{B}_{d+k}^{S_k},  z^2B_{d+k-1}, z^4B_{d+k-2}, \cdots, z^{2(d+k)}B_0),$
\end{center}
{\rm (2)} If $\delta - d  = 2k + 1$, where $k$ is a positive integer, then
\begin{center}
$\initial(I,g) = (\initial(I), \mathbf{B}_d^{\{1\}}, \mathbf{B}_{d+1}^{S_1}, \mathbf{B}_{d+2}^{S_2}, \cdots, \mathbf{B}_{d+k}^{S_{k}}, B_{d+k+1}, zB_{d+k}, z^3B_{d+k-1}, \cdots, z^{2(d+k)+1}B_0),$
\end{center}
{\rm (3)} If $\delta - d  = 1$ then $\initial(I,g) = (\initial(I), \mathbf{B}_d^{\{1\}}, B_{d+1}, zB_d, z^3B_{d-1}, \cdots, z^{2d+1}B_0),$
\vskip 0.2cm
\noindent where  in {\rm (1)} and {\rm(2)}, $S_i$ is a subset of $\{1, 2, \cdots, a_{d+i}\}$ containing $a_i$ elements, for every $i = 1, 2, \cdots , k$.
\end{Proposition}
\begin{proof}
Since $g$ is a combination of all monomials in $E_d = B_d \cup zB_{d-1} \cup \cdots \cup z^dB_0$, we can write
$$g = \mathbf{v}_d\mathbf{B}_d + \mathbf{v}_{d-1}\mathbf{B}_{d-1}z + \cdots + \mathbf{v}_1\mathbf{B}_1z^{d-1} + \mathbf{v}_0z^d,$$
where $\mathbf{v}_i$ is the row vector of the coefficients of $g$ corresponding to the monomials in $\mathbf{B}_i$. Denote the last coefficient of $\mathbf{v_d}$ by $c_d$. Note that $c_d$ is the coefficient corresponding to the monomial $x_n^d$. Set $\mathbf{v}_d^* = \mathbf{v}_d \setminus \{ c_d \}$. We will construct a set of generators for $\initial(I,g)$ by using incremental method. According to equation (\ref{Equation}),  $\mathbf{E}_i.g \equiv M_i\mathbf{E}_{i+d} \ \text{(mod} \ G)$, for each $i$ from $0$ to $\delta$,  we find the monomials that will be added to $\initial(I)$.
\vskip 0.3cm
\noindent $\bullet$ For $i = 0$, we have $E_0 = \{ 1 \}$ and $M_0$ is a row matrix $\pmt{\mathbf{v}_d & \mathbf{v}_{d-1} & \cdots & \mathbf{v}_1 & \mathbf{v}_0}$. Hence, the first column of $M_0$ is linearly independent. Thus, the largest monomial of $\mathbf{B}_d$ will be added to $\initial(I)$.
\vskip 0.3cm
\noindent $\bullet$ For $1 \leq i \leq i^* = \lfloor\frac{\delta-d}{2}\rfloor = k$ (in the case $\delta -d \geq 2$), we have
$$E_i = B_i \cup zB_{i-1} \cup \cdots \cup z^{i-1}B_1 \cup z^iB_0,$$
and
$$E_{d+i} = B_{d+i} \cup zB_{d+i-1} \cup \cdots \cup z^{i-1}B_{d+1} \cup z^iB_d \cup \cdots \cup z^{d+i}B_0.$$
\vskip 0.3cm
Therefore equation (\ref{Equation}) can be explicitly written in the following form

\begin{equation}\label{Equation 1}
\mt{& \mt{\mathbf{B}_{d+i} & z\mathbf{B}_{d+i-1} &  \ \  \cdots & z^{i-1}\mathbf{B}_{d+1} & z^i\mathbf{B}_d & \ \cdots &  z^{d+i}} \\
\pmt{\mathbf{B}_i\\ z\mathbf{B}_{i-1}\\ \vdots \\ z^{i-1}\mathbf{B}_1\\ z^i\mathbf{B}_0}.g  \equiv & \pmt{\ \Gamma_{i,d+i}&\Gamma_{i,d+i-1}&\cdots&\Gamma_{i,d+1}&\Gamma_{i,d}&\ \cdots&\Gamma_{i,0}\\
\ 0&\Gamma_{i-1,d+i-1}&\cdots&\Gamma_{i-1,d+1}&\Gamma_{i-1,d}&\ \cdots&\Gamma_{i-1,0}\\
\ \vdots&\vdots&\ddots&\vdots&\vdots&\ \ddots&\vdots\\
\ 0&0&\cdots&\Gamma_{1,d+1}&\Gamma_{1,d}&\ \cdots& \Gamma_{1,0}\\
\ 0&0&\cdots&0&\Gamma_{0,d}&\ \cdots&\Gamma_{0,0}}},
\end{equation}
\vskip 0.3cm
\noindent where the entries of block $\Gamma_{j,l}$, for $0 \leq j \leq i$ and $0 \leq l \leq d+i$, are the coefficients corresponding to the monomials in $z^{d+i-l}B_l$, in the reduced form of the polynomials in $z^{i-j}gB_j$.
\vskip 0.3cm
Denote by $A_i, A_{i-1}, \cdots, A_1, A_0$ the submatrices of $M_i$ formed by the columns corresponding to the monomials in $\mathbf{B}_{d+i}, z\mathbf{B}_{d+i-1}, \cdots, z^{i-1}\mathbf{B_{d+1}}, z^i\mathbf{B_d}$ respectively.
\vskip 0.3cm
We now consider the block $\Gamma_{i,d+i}$. Note that the entries of $\Gamma_{i,d+i}$ are the $\bar{F}$-linear combinations of the coefficients in $\mathbf{v}_d$. Since $ i \leq \dfrac{\delta-d}{2}$, we have $i < \sigma$ and
$$\mathbf{B}_i = \widetilde{\mathbf{B}}_i  \cup  x_n \widetilde{\mathbf{B}}_{i-1} \cup \cdots \cup x_n^{i-1}\widetilde{\mathbf{B}}_1 \cup \{ x_n^i \}.$$
Since $d+i \leq \delta -i$, we have $a_{d+i} \geq a_i$ and the $a_i$ smallest monomials in $\mathbf{B}_{d+i}$ are
$$x_n^d\mathbf{B}_i = x_n^d\widetilde{\mathbf{B}}_i  \cup  x_n^{d+1} \widetilde{\mathbf{B}}_{i-1} \cup \cdots \cup x_n^{d+i-1}\widetilde{\mathbf{B}}_1 \cup \{ x_n^{d+i} \}.$$
Let $x^{\alpha} \in \mathbf{B}_i$. Suppose $x^{\alpha}  = x_n^jx^{\beta} \in x_n^{j}\widetilde{\mathbf{B}}_{i-j}$, for some $0 \leq j \leq i$ and $x^{\beta} \in \widetilde{\mathbf{B}}_{i-j}$ . Then
$$x^{\alpha}x_n^d  = x_n^{d+j}x^{\beta} \in x_n^{d+j}\widetilde{\mathbf{B}}_{i-j} \subset \mathbf{B}_{d+i}.$$
Thus, the term $c_d.{x^\alpha}x_n^d$ of the product $x^{\alpha}g$ is reduced mod $G$. Therefore, in the coefficients of the reduced form of the product $x^{\alpha}.g$, $c_d$ will appear only in the coefficient of the monomial $x_n^{d+j}x^{\beta} \in \mathbf{\mathbf{B}}_{d+i}$. Thus,
$$\Gamma_{i,d+i} = \pmt{L_{1,1} & \cdots & c_d + L_{1,s} & L_{1,s+1} & \cdots & L_{1,a_{d+i}}  \\ L_{2,1} & \cdots & L_{2,s} & c_d + L_{2,s+1} & \cdots & L_{2,a_{d+i}}  \\ \vdots & \ddots & \vdots & \vdots & \ddots & \vdots \\ L_{a_i,1} & \cdots & L_{a_i,s} & L_{a_i,s+1} & \cdots & c_d + L_{a_i,a_{d+i}}},$$
where $s=a_{d+i} - a_i +1$ and $L_{a,b}$, for $1 \leq a \leq a_i$ and $1 \leq b \leq a_{d+i}$, is a $\bar{F}$-linear combination of the coefficients in $\mathbf{v}_d^*$. Hence, the $a_i$ last columns of $\Gamma_{i,d+i}$ are linearly independent. So,  $\rank(\Gamma_{i,d+i}) = a_i$. This implies that the $a_i$ first linearly independent columns of $M_i$ are the $a_i$ first linearly independent columns of $A_i$ (previously defined). Define $S_i$ to be the subset of $\{1, 2, \cdots, a_{d+i}\}$ such that its elements are the indices of the $a_i$ first linearly independent columns of $A_i$. Then the monomials in $\mathbf{B}_{d+i}^{S_i}$ will be added to $\initial(I)$. Since the $a_{i-1}$ last columns of $\Gamma_{i-1,d+i-1}$ are linearly independent again, we have
$$\rank \pmt{\Gamma_{i,d+i} & \Gamma_{i,d+i-1} \\ 0 & \Gamma_{i-1,d+i-1}}= a_i + a_{i-1}.$$
Therefore, the $a_{i-1}$ next linearly independent columns of $M_i$ are the $a_{i-1}$ first linearly independent columns of $A_{i-1}$ and so on.  We have the $a_i + a_{i-1} + \cdots + a_0$ first linearly independent columns of $M_i$ are the $a_i, a_{i-1}, \cdots, a_1, a_0$ first linearly independent columns of $A_i, A_{i-1}, \cdots, A_1, A_0$ respectively.

However, the monomials in $z\mathbf{B}_{d+i-1}, \cdots, z^{i-1}\mathbf{B}_{d+1}, z^i\mathbf{B}_d$ corresponding to the first linearly independent columns of $A_{i-1}, \cdots, A_1, A_0$ respectively are redundant since they are multiples of the monomials that were already added to $\initial(I)$ in the steps $i-1, \cdots, 1, 0$. Thus, in the step $i$, only the monomials in $\mathbf{B}_{d+i}^{S_i}$ will be added to $\initial(I)$.

Moreover, in the case $\delta - d  = 2k$ we have $\mathbf{B}_{d+k}^{S_k} = B_{d+k}$. Indeed, since $d+k = \delta - k$, by \cite[Lemma 3.4]{JS} one has $a_{d+k} = a_{\delta-k} = a_k$. Hence, $S_k = \{1, 2, \cdots, a_{d+k}\}$.
\vskip 0.2cm
\noindent $\bullet$  For $i^* <  i  < \delta - d$ (in the case $\delta -d \geq 3$), equation (\ref{Equation}) also has form as in (\ref{Equation 1}). Let $\Lambda_i$ denote the square submatrix of $M_i$ given by
$$\mt{& \ \  \mt{\mathbf{B}_{d+i} & z\mathbf{B}_{d+i-1} &  \ \  \cdots & z^{d+2i-\delta}\mathbf{B}_{\delta-i}} \\
\Lambda_i = & \pmt{\Gamma_{i,d+i}&\Gamma_{i,d+i-1}&\cdots&\ \Gamma_{i,\delta-i}\\
0&\Gamma_{i-1,d+i-1}&\cdots&\ \Gamma_{i-1,\delta-i}\\
\vdots&\vdots&\ddots&\ \vdots\\
0&0&\cdots&\ \Gamma_{\delta-d-i,\delta-i}}}.$$
\vskip 0.3cm
Then, $M_i$ has form
$$M_i = \pmt{ \Lambda_i & \Omega \\ 0 & M_{\delta-d-i-1}}.$$
\vskip 0.3cm
By \cite[Proposition 3.16]{JS} $\Lambda_i$ is nonsingular. Hence, the first linearly independent columns of $M_i$ are given by all the columns of $\Lambda_i$ and the columns corresponding to the first linearly independent columns of $M_{\delta-d-i-1}$. Note that the monomials in $E_{d+i}$ corresponding to the first linearly independent columns of $M_{\delta-d-i-1}$ are redundant since they are multiplies of monomials were already added to $\initial(I)$ in the step $\delta-d-i-1$. Furthermore, by using Lemma \ref{multiple}, for $i = i^*+1, i^*+2, \cdots, \delta - d-1$, we obtain the following.
\vskip 0.2cm
If $\delta - d  = 2k$, where $k$ is an integer and $k \geq 2$, then the monomials in $z^2B_{d+k-1}, z^4B_{d+k-2},\\ \cdots, z^{2k-2}B_{d+1}$ will be added to $\initial(I)$.
\vskip 0.2cm
If $\delta - d  = 2k+1$, where $k$ is a positive integer, then the monomials in $B_{d+k+1}, zB_{d+k},\\ z^3B_{d+k-1}, \cdots, z^{2k-1}B_{d+1}$ will be added to $\initial(I)$.
\vskip 0.2cm
\noindent $\bullet$ For $\delta - d \leq i \leq \delta$, by \cite[Corollary 3.11]{JS} the $|E_i|$ first columns of $M_i$ are linearly independent. Hence, we obtain the following.
\vskip 0.2cm
If $\delta - d \geq 2$ then the monomials in $z^{\delta - d}B_d$ , $z^{\delta - d + 2}B_{d-1}$, $\cdots$, $z^{\delta+d-2}B_1$, $z^{\delta+d}B_0$ will be added to $\initial(I)$.
\vskip 0.2cm
If $\delta - d = 1$ then the monomials in $B_{d+1}$, $zB_d$, $z^3B_{d-1}$, $\cdots$, $z^{2d+1}B_0$ will be added to $\initial(I)$.
\end{proof}

\begin{Remark} \emph{The set of generators of $\initial(I,g)$, which appears in Proposition \ref{Structure initial}, is not minimal. For instance the monomial $z^{\delta - d}\mathbf{B}_d^{\{1\}}$ is a multiple of $\mathbf{B}_d^{\{1\}}$.}
\end{Remark}

The following lemma will be useful for proving Proposition \ref{Part 2}.
\begin{Lemma}\label{Lemma} In the case $\delta - d \geq 2$ we have $B_{d+i} \subset \initial(I,g)$ for every $i > i^*$.
\end{Lemma}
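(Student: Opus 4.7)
The plan is to deduce the lemma almost immediately from Proposition \ref{Structure initial} combined with Lemma \ref{multiple}. The key observation is that Proposition \ref{Structure initial} places some full level $B_{d+k}$ (with $k$ equal to $i^*$ in the even case or $i^*+1$ in the odd case) inside the generators of $\initial(I,g)$, and Lemma \ref{multiple} lets us push containment to higher degrees via multiplication by powers of $x_n$. Since $\initial(I,g)$ is an ideal, propagation is automatic once we know the seed level is contained.

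I would split the argument into the two parity cases and additionally handle the trivial range where $d+i>\delta$ (there $B_{d+i}=\emptyset$ because $A=R/J$ has socle degree $\delta$). In the case $\delta-d\equiv 0\pmod 2$, the proof of Proposition \ref{Structure initial} already verifies that $S_{i^*}=[1,a_{d+i^*}]$, using $a_{d+i^*}=a_{\delta-i^*}=a_{i^*}$; hence $\mathbf{B}_{d+i^*}^{S_{i^*}}=B_{d+i^*}$ and the entire set $B_{d+i^*}$ sits inside $\initial(I,g)$. For any $i>i^*$ with $d+i\le\delta$, I would invoke Lemma \ref{multiple} with $j=i^*$ to write $B_{d+i}=x_n^{i-i^*}S$ for some subset $S\subset B_{d+i^*}\subset\initial(I,g)$, which forces $B_{d+i}\subset\initial(I,g)$.

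In the case $\delta-d\equiv 1\pmod 2$, Proposition \ref{Structure initial} directly lists the full set $B_{d+i^*+1}$ among the generators of $\initial(I,g)$, so the conclusion is immediate for $i=i^*+1$. For $i>i^*+1$ (and $d+i\le\delta$) I would again apply Lemma \ref{multiple}, this time with $j=i^*+1\ge i^*$, to obtain $B_{d+i}=x_n^{i-i^*-1}S'$ with $S'\subset B_{d+i^*+1}\subset\initial(I,g)$. I do not anticipate a real obstacle here: the only delicate point is to double-check that the index $j$ used in Lemma \ref{multiple} really satisfies $j\ge i^*$ in each parity case, and that the seed level is in fact fully contained (which is exactly what Proposition \ref{Structure initial} was set up to guarantee).
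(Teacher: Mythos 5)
Your proposal is correct and is essentially identical to the paper's own argument: both identify the seed level $B_{d+i^*}$ (even case, via $S_{i^*}=[1,a_{d+i^*}]$) or $B_{d+i^*+1}$ (odd case, explicitly listed in Proposition \ref{Structure initial}) and then propagate upward using Lemma \ref{multiple} together with the ideal property of $\initial(I,g)$. The extra remark about $d+i>\delta$ (where $B_{d+i}=\emptyset$) is a harmless refinement the paper leaves implicit.
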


\begin{proof}
If $\delta - d  = 2k$, where $k$ is a positive integer, then $i^* =k$. By Proposition \ref{Structure initial}, we have $B_{d+i^*} = \mathbf{B}_{d+i^*}^{S_{i^*}} \subset \initial(I,g)$. Hence, by Lemma \ref{multiple}, $B_{d+i} \subset \initial(I,g)$ for every $i > i^*$.
\vskip 0.2cm
If $\delta - d  = 2k+1$, where $k$ is a positive integer, then $i^* =k$. By Proposition \ref{Structure initial}, we have $B_{d+i^*+1} \subset \initial(I,g)$. Hence, by Lemma \ref{multiple}, $B_{d+i} \subset \initial(I,g)$ for every $i > i^*+1$. Thus, $B_{d+i} \subset \initial(I,g)$ for every $i > i^*$.
\end{proof}
In the next proposition, we will prove that the generators of $\initial(I,g)$ not in $\initial(I)$, $\mathbf{B}_d^{\{1\}}$, $\mathbf{B}_{d+1}^{S_1}$, $\cdots$, $\mathbf{B}_{d+i^*}^{S_{i^*}}$ satisfy property $P$ with respect to $(I,g)$ in any case.
\begin{Proposition}\label{Part 2} {\rm (1)} If $\delta - d = 2k$, where $k$ is a positive integer, then the generators of $\initial(I,g)$ in $z^2B_{d+k-1}$, $z^4B_{d+k-2}$, $\cdots$, $z^{2(d+k)}B_0$ satisfy property $P$ with respect to $(I,g)$.
\vskip 0.2cm
\noindent {\rm (2)} If $\delta - d  = 2k + 1$, where $k$ is a non-negative integer, then the generators of $\initial(I,g)$ in $B_{d+k+1}$, $zB_{d+k}$, $z^3B_{d+k-1}$, $\cdots$, $z^{2(d+k)+1}B_0$ satisfy property $P$ with respect to $(I,g)$.
\end{Proposition}

\begin{proof}
(1) If $x^\mu$ is a generator of $\initial(I,g)$ in $z^2B_{d+k-1}$, $z^4B_{d+k-2}$, $\cdots$, $z^{2(d+k)}B_0$, then $x^\mu$ is divisible by $z$ and $\deg(x^\mu) = l > d+k$. For every monomial $x^\alpha$ of degree $l$ in variables $x_1, \cdots, x_n$, if $x^\alpha \in B_l$, by Lemma \ref{Lemma}, then we have $x^\alpha \in \initial(I,g)$. Otherwise $x^\alpha \notin B_l$, so that $x^\alpha \in \initial(I) \subset \initial(I,g)$. Thus, $x^\mu$ satisfies property $P$ with respect to $(I,g)$.

(2) If $x^\mu$ is a generator of $\initial(I,g)$ in $B_{d+k+1}$, then $x^\mu$ is divisible by $x_n$. Indeed, by \cite[Lemma 3.4]{JS}, $B_{d+k+1} = B_{\delta-k} = x_n^{d+1}B_{k}$. Hence, for every monomial $x^\alpha$ of degree $d+k+1$ in variables $x_1, \cdots, x_{n-1}$, we have $x^\alpha \notin B_{d+k+1}$, so that $x^\alpha \in \initial(I) \subset \initial(I,g)$. Thus, $x^\mu$ satisfies property $P$ with respect to $(I,g)$.

If $x^\mu$ is a generator of $\initial(I,g)$ in $zB_{d+k}$, $z^3B_{d+k-1}$, $\cdots$, $z^{2(d+k)+1}B_0$. Then, by an argument as in (1), $x^\mu$ satisfies property $P$ with respect to $(I,g)$.
\end{proof}
We still have to prove that the minimal generators of $\initial(I,g)$ in $\mathbf{B}_{d+1}^{S_1}, \cdots, \mathbf{B}_{d+i^*}^{S_{i^*}}$ satisfy property $P$ with respect to $(I,g)$. Under condition $d \geq \sigma$, we get the following.
\begin{Proposition} \label{Part 1} If $\sigma \leq d \leq \delta -2$, then the generators of $\initial(I,g)$ in $\mathbf{B}_{d+1}^{S_1}, \cdots, \mathbf{B}_{d+i^*}^{S_{i^*}}$ satisfy  property $P$ with respect to $(I,g)$.
\end{Proposition}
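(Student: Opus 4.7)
The plan is to exploit the hypothesis $d \geq \sigma$ to force every standard monomial of degree $d+i$ (for $1 \leq i \leq i^*$) to be divisible by $x_n$; once this is established, property $P$ becomes automatic because the only ``competitors'' are degree-$(d+i)$ monomials in $x_1,\ldots,x_{n-1}$, and these are not in $B_{d+i}$, hence already lie in $\initial(I) \subseteq \initial(I,g)$.

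First I would observe that for every $1 \leq i \leq i^*$ we have $d + i \geq \sigma + 1 > \sigma$, so the degree $d+i$ lies in the range $(\sigma,\delta]$, which is covered by parts (2) and (3) of Proposition \ref{Structure B}. I would then split into two cases. If $d + i \leq \delta - \sigma$, part (2) gives
$$B_{d+i} = x_n^{d+i-\sigma}\, B_\sigma, \qquad d+i-\sigma \geq 1,$$
so every monomial of $B_{d+i}$ is divisible by $x_n$. If $d+i > \delta - \sigma$, then using $\sigma \leq \lfloor \delta/2 \rfloor$ we get $\delta - \sigma \geq \lceil \delta/2 \rceil$, hence $2(d+i) - \delta \geq 1$, and part (3) yields
$$B_{d+i} = x_n^{2(d+i)-\delta}\, B_{\delta - d - i},$$
again with every monomial divisible by $x_n$. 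Thus in either case every element of $B_{d+i}$ carries a positive power of $x_n$.

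Now let $x^\mu \in \mathbf{B}_{d+i}^{S_i} \subseteq B_{d+i}$ be any generator coming from the Proposition \ref{Structure initial} description. By the previous paragraph $x_n \mid x^\mu$, so $\max(x^\mu) = n$. Property $P$ therefore requires exactly that every monomial $x^\alpha$ of degree $d+i$ in the variables $x_1,\ldots,x_{n-1}$ belongs to $\initial(I,g)$. But such an $x^\alpha$ is not divisible by $x_n$, so by the structural observation above it cannot lie in $B_{d+i}$. Hence $x^\alpha \in \initial(J) = \initial(I) \subseteq \initial(I,g)$, and property $P$ holds.

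There is no serious obstacle: the assumption $d \geq \sigma$ pushes every relevant degree past the peak of the (symmetric) Hilbert function of $A = R/J$, and the structure theorem for the standard monomials does the rest. The only item demanding a moment of care is the case split in Proposition \ref{Structure B}, but the inequality $\sigma \leq \lfloor \delta/2 \rfloor$ built into the definition of $\sigma$ is exactly what guarantees that the exponent of $x_n$ in each case is at least $1$.
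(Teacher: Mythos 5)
Your proof is correct and follows essentially the same approach as the paper: the key observation is that $d \geq \sigma$ forces every monomial of $B_{d+i}$ (for $1 \leq i \leq i^*$) to be divisible by $x_n$, so any degree-$(d+i)$ monomial in $x_1,\ldots,x_{n-1}$ already lies in $\initial(I)$. The paper simply cites Proposition \ref{Structure B} for the $x_n$-divisibility; you spell out the case split between parts (2) and (3) of that proposition, which is a helpful elaboration but not a different argument.
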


\begin{proof}
Since $d \geq \sigma$ , by Propsition \ref{Structure B} the monomials in $B_{d+1}, \cdots, B_{d+i^*}$ are divisible by $x_n$. Hence, if $x^\mu$ is a generator of $\initial(I,g)$ in $\mathbf{B}_{d+i}^{S_i}$, for some $1 \leq i \leq i^*$,  then  $x^\mu$ is divisible by $x_n$. This implies $x^\mu$ satisfies property $P$ with respect to $(I,g)$ because for every monomial $x^\alpha$ of degree $d+i$ in variables $x_1, \cdots, x_{n-1}$, we have $x^\alpha \notin B_{d+i}$, so that $x^\alpha \in \initial(I) \subset \initial(I,g)$.
\end{proof}

\begin{proof}[Proof of Theorem \ref{Pardue E with condition}.] If $\delta - d  = 2k$, where $k$ is a positive integer, then by Proposition \ref{Structure initial} we have
$$\initial(I,g) = (\initial(I), \mathbf{B}_d^{\{1\}}, \mathbf{B}_{d+1}^{S_1}, \cdots, \mathbf{B}_{d+k}^{S_k}, z^2B_{d+k-1}, z^4B_{d+k-2}, \cdots, z^{2(d+k)}B_0).$$
\noindent The monomial $\mathbf{B}_d^{\{1\}}$ satisfies property $P$ with respect to $(I,g)$ because it is the largest monomial of $\mathbf{B}_d$. By Proposition \ref{Part 2} and Proposition \ref{Part 1} the generators of $\initial(I,g)$ in $z^2B_{d+k-1}, z^4B_{d+k-2}, \cdots, z^{2(d+k)}B_0$ and in $\mathbf{B}_{d+1}^{S_1}, \cdots, \mathbf{B}_{d+k}^{S_k}$   satisfy property $P$ with respect to $(I,g)$. Hence, if Conjecture \ref{Pardue E} is true for $J = \pi(I)$, then every minimal generator of $\initial(I,g)$ satisfies property $P$ with respect to $(I,g)$, so that Conjecture \ref{Pardue E} is true for $\initial(I,g)$.

In case $\delta - d  = 2k + 1$, where $k$ is a non-negative integer, theorem is proved by a completely analogous argument as above.
\end{proof}
From Proposition \ref{Structure initial}, we have the following corollary which describes more explicitly the set of the standard monomials with respect to $(I,g)$ in case $d < \delta$.
\begin{Corollary} \label{Structure F}
Let $(I,g) = (f_1,\cdots,f_n, g)$ be a generic homogeneous ideal of type $(n+1;d_1,\cdots,d_n,d)$ in $K [x_1,\cdots,x_n,z]$, where $d_1 \leq \cdots \leq d_n \leq d$ and $d < \delta$. Let $B = B(n;d_1, \cdots , d_n)$ and $F = B(n+1;d_1,\cdots,d_n,d)$.
\vskip 0.2cm
{\rm (1)} If $\delta - d  = 2k$, where $k$ is a positive integer, then
\vskip 0.2cm
\noindent $F_0 = B_0$,
\vskip 0.2cm
\noindent $F_i = B_i \cup zF_{i-1}$ for every $1 \leq i \leq d-1$,
\vskip 0.2cm
\noindent $F_d = \mathbf{B}_d^{\{2,\cdots,a_d\}} \cup zF_{d-1}$,
\vskip 0.2cm
\noindent $F_{d+i} = \mathbf{B}_{d+i}^{\{1, 2, \cdots, a_{d+i}\}\setminus S_i} \cup zF_{d+i-1}$  for every $1 \leq i \leq k-1$,
\vskip 0.2cm
\noindent $F_{d+k} = zF_{d+k-1}$, $F_{d+k+1} = z^3F_{d+k-2}, \cdots, F_{2(d+k)-1} = z^{2(d+k)-1}F_0.$
\vskip 0.2cm
{\rm (2)} If $\delta - d  = 2k + 1$, where $k$ is a positive integer, then
\vskip 0.2cm
\noindent $F_0 = B_0$,
\vskip 0.2cm
\noindent $F_i = B_i \cup zF_{i-1}$ for every $1 \leq i \leq d-1$,
\vskip 0.2cm
\noindent $F_d = \mathbf{B}_d^{\{2,\cdots,a_d\}} \cup zF_{d-1}$,
\vskip 0.2cm
\noindent $F_{d+i} = \mathbf{B}_{d+i}^{\{1, 2, \cdots, a_{d+i}\}\setminus S_i} \cup zF_{d+i-1}$  for every $1 \leq i \leq k$,
\vskip 0.2cm
\noindent $F_{d+k+1} = z^2F_{d+k-1}$, $F_{d+k+2} = z^4F_{d+k-2}, \cdots, F_{2(d+k)} = z^{2(d+k)}F_0.$
\end{Corollary}
Thus, in order to construct $F = B(n+1, d_1,\cdots,d_n,d)$ from  $B = B(n;d_1,\cdots,d_n)$ we only need to know explicitly the monomials in $\mathbf{B}_{d+1}^{S_1}, \cdots, \mathbf{B}_{d+i^*}^{S_{i^*}}$.

In the following example, we help the reader to construct $\initial(I,g)$ from $\initial(I)$  according to Proposition \ref{Structure initial}. Moreover, we construct $F = B(n+1; d_1, \cdots, d_n,d)$ from $B(n; d_1, \cdots, d_n)$ according to Corollary \ref{Structure F}.
\vskip 0.3cm
\begin{Example}\label{Ex structure in}\emph{Let $(I,g)=(f_1,\cdots,f_4,g)$ be the generic ideal of type $(5; 2,3,3,4,5)$ in $K[x_1, \cdots, x_4, z]$. Let $B = B(4;2,3,3,4)$ as in Example \ref{ex structure B}. Then $\delta = 8, \sigma = 4, d = 5$ and $i^*=\lfloor\frac{\delta-d}{2}\rfloor = 1$. We write  $g$ in reduced form as the following}
$$g = \mathbf{v}_5\mathbf{B}_5 + \mathbf{v}_4\mathbf{B}_4z + \cdots + \mathbf{v}_1\mathbf{B}_1z^4 + z^5.$$

\emph{We will construct a set of generators for $\initial(I,g)$ by using incremental method as in the proof Proposition \ref{Structure initial}. According to equation (\ref{Equation}), $\mathbf{E}_i.g \equiv M_i\mathbf{E}_{i+d} \ \text{(mod} \ G)$, for each $i$ from $0$ to $8$,  we find the monomials that will be added to $\initial(I)$.}

\emph{For $i=0$, the largest monomials of $\mathbf{B}_5$ will be added to $\initial(I)$, in this case it is $x_1x_2x_3^2x_4^2$.}

\emph{For $i=1,$
\begin{align*}
& \quad  \mt{\mathbf{B}_6 & \ z\mathbf{B}_5 & \cdots & z^5\mathbf{B}_1 & z^6} \\
\mathbf{E}_1.g \equiv M_1\mathbf{E}_6 \ \Leftrightarrow \ \pmt{\mathbf{B}_1 \\ z}.g \equiv &\pmt{\Gamma_{1,6} & \Gamma_{1,5} & \cdots & \ \Gamma_{1,1} & \Gamma_{1,0} \\ 0 & \Gamma_{0,5} & \cdots & \ \Gamma_{0,1} & \Gamma_{0,0}},
\end{align*}}

\noindent \emph{where $\mathbf{B}_1 = \mathbf{\widetilde{B}}_1 \cup \{ x_4 \}$ and $\mathbf{B}_6 = x_4^4\mathbf{\widetilde{B}}_2 \cup x_4^5\mathbf{\widetilde{B}}_1 \cup \{x_4^6 \}$. The monomials in $\mathbf{B}_6$ corresponding to the $a_1 = 4$ first linearly independent columns of $\Gamma_{1,6}$ will be added to $\initial(I)$. By using Macaulay2 to compute $\initial(I,g)$, we see that the $4$ largest monomials of $\mathbf{B}_6$ are the minimal generators of $\initial(I,g)$. This means that the $4$ first columns of $\Gamma_{1,6}$ are linearly independent, so that $\mathbf{B}_6^{S_1} = \mathbf{B}_6^{\{1,2,3,4\}}$.}

\emph{For $2 \leq i \leq 8$ the monomials will be added to $\initial(I)$ are $B_7$, $zB_6$, $z^3B_5$, $z^5B_4$, $z^7B_3$, $z^9B_2$, $z^{11}B_1$, $z^{13}$. Thus, the set of generators of $\initial(I,g)$  is
$$\initial(I,g) = (\initial(I), \mathbf{B}_5^{\{1\}}, \mathbf{B}_6^{\{1,2,3,4\}}, B_7, zB_6, z^3B_5, z^5B_4, z^7B_3, z^9B_2, z^{11}B_1, z^{13}).$$ }
\hspace*{10pt}\emph{Let $F = B(5;2,3,3,4,5)$  be the set of the standard monomials with respect to $(I,g)$. Denote by $f_i = |F_i|$ and $f'_i = |\widetilde{F}_i|$. By Corollary \ref{Structure F}, we have}
\emph{\begin{align*}
&F_0 = \{ 1 \}  & f_0 = 1.\\
&F_1 = B_1 \cup \{ z \}   & f'_1 = 4 , f_1 = 5 .\\
&F_2 = B_2 \cup zF_1   & f'_2 = 9 , f_2 = 14 .\\
&F_3 = B_3 \cup zF_2   &  f'_3 = 14, f_3 = 28 .\\
&F_4 = B_4 \cup zF_3   &  f'_4 = 16 , f_4 = 44 .\\
&F_5 = \widetilde{F}_5 \cup zF_4 \ \text{where} \ \widetilde{F}_5 = \mathbf{B}_5^{\{2,3,\cdots,14\}} &  f'_5 = 13, f_5 = 57.\\
&F_6 = \widetilde{F}_6 \cup zF_5 \ \text{where} \ \widetilde{F}_6 = \mathbf{B}_6^{\{5,6,\cdots,9\}} &  f'_6 = 5, f_6 = 62.\\
&F_7 = z^2F_5; \ F_{8} = z^4F_4; \ \cdots; F_{11} = z^{10}F_1; \ F_{12} = z^{12}F_0.&
\end{align*}}
\end{Example}
In \cite[Conjecture 3.14]{JS}, it is conjectured that $\mathbf{B}_{d+i}^{S_i}$ are the $a_i$ largest monomials of $\mathbf{B}_{d+i}$ for every $i = 0, \cdots, i^*$. However, in the following example, we show that this conjecture is not true.
\vskip 0.3cm
\begin{Example} \emph{Let $(I,g)=(f_1,\cdots,f_5,g)$ be the generic ideal of type $(6; 2,3,3,4,5,5)$ in $K[x_1, \cdots, x_5, z]$. Let $F = B(5;2,3,3,4,5)$ as in Example \ref{Ex structure in} with the variable $x_5$ instead of variable $z$. Then $\delta = 12, \sigma = 6$, $d=5$ and $i^*=\lfloor\frac{\delta-d}{2}\rfloor = 3$. Here $F$ plays the same  role of $B$ in Proposition \ref{Structure initial}. We write $g$ in reduced form as the following}
$$g = \mathbf{v}_5\mathbf{F}_5 + \mathbf{v}_4\mathbf{F}_4z + \cdots + \mathbf{v}_1\mathbf{F}_1z^4 + z^5.$$

\emph{According to the incremental method, for $i=0$, the largest monomials of $\mathbf{F}_5$ will be added to $\initial(I)$, in this case it is $x_2^2x_3x_4^2$.}

\emph{For $i=1,$
\begin{align*}
& \quad  \mt{\mathbf{F}_6 & \ z\mathbf{F}_5 & \cdots & z^5\mathbf{F}_1 & z^6} \\
\mathbf{E}_1.g \equiv M_1\mathbf{E}_6 \ \Leftrightarrow \ \pmt{\mathbf{F}_1 \\ z}.g \equiv &\pmt{\Gamma_{1,6} & \Gamma_{1,5} & \cdots & \ \Gamma_{1,1} & \Gamma_{1,0} \\ 0 & \Gamma_{0,5} & \cdots & \ \Gamma_{0,1} & \Gamma_{0,0}},
\end{align*}}

\noindent \emph{where $\mathbf{F}_1 = \mathbf{\widetilde{F}}_1 \cup \{ x_5 \}$ and $\mathbf{F}_6 = \mathbf{\widetilde{F}}_6 \cup x_5\mathbf{\widetilde{F}}_5 \cdots \cup x_5^5\mathbf{\widetilde{F}}_1 \cup \{x_5^6 \}$. The monomials in $\mathbf{F}_6$ corresponding to the $f_1=5$ first linearly independent columns of $\Gamma_{1,6}$ will be added to $\initial(I)$. We have}
\begin{align*}
& \quad  \mt{\mathbf{\widetilde{F}}_6 & x_5\mathbf{\widetilde{F}}_5 & \cdots & x_5^5\mathbf{\widetilde{F}}_1 & x_5^6 } \\
\mathbf{F}_1.g \longleftrightarrow \Gamma_{1,6}\mathbf{F}_6 \ \Leftrightarrow \ \pmt{\mathbf{\widetilde{F}}_1 \\ x_5}.g \longleftrightarrow &\pmt{\Omega_{1,6} & \Omega_{1,5} & \cdots & \ \Omega_{1,1} & \Omega_{1,0} \\ 0 & \Omega_{0,5} & \cdots & \ \Omega_{0,1} & \Omega_{0,0}},
\end{align*}
\emph{Since $|\mathbf{\widetilde{F}}_1| = f'_1 = 4$ and $|\mathbf{\widetilde{F}}_6| = f'_6 = 5$, we get $\rank(\Omega_{1,6}) \leq 4$. By using Macaulay2 to compute $\initial(I,g)$, we see that the $4$ largest monomials of $\mathbf{F}_6$ are the minimal generators of $\initial(I,g)$. This means that the $4$ first columns of $\Omega_{1,6}$ are linearly independent. Hence the $5$ first linearly independent columns of $\Gamma_{1,6}$ are the $4$ first columns and the $6$-th column. Thus $\mathbf{F}_6^{S_1} = \mathbf{F}_6^{\{1,2,3,4\}} \cup \mathbf{F}_6^{\{6\}}$. However the monomial $\mathbf{F}_6^{\{6\}}$ is not a minimal generator because $\mathbf{F}_6^{\{6\}} = x_5\mathbf{F}_5^{\{1\}}$ and $\mathbf{F}_5^{\{1\}}$ was already added to $\initial(I)$ in step $i=0$.}

\end{Example}

\section{Application for Fr\"{o}berg's Conjecture}
In \cite[Theorem 2]{P}, Pardue proved that Fr\"{o}berg's Conjecture is equivalent to Conjecture \ref{Pardue E}. In order to prove the equivalence of the conjectures, Pardue used the notion of semi-regular sequences that was introduced in \cite[Section 3]{P}.
Regular sequences and semi-regular sequences can be characterized by Hilbert series.

\begin{Proposition} \cite[Proposition 1]{P}\label{semi-regular}
Let $A = K[x_1,\cdots,x_n]/I$, where $I$ is a homogeneous ideal, and $f_1, \cdots, f_r$ are homogeneous polynomials of degree $d_1, \cdots, d_r$. Then,
\vskip 0.3cm
\noindent {\rm (1)} $f_1, \cdots, f_r$ is a semi-regular sequence on $A$ if and only if for all $s = 1, \cdots, r$
$$HS_{A/(f_1, \cdots, f_s)}(z) = \left\lceil \pmt{\prod_{i=1}^s(1 - z^{d_i})} HS_A(z) \right\rceil.$$
{\rm (2)} $f_1, \cdots, f_r$ is a regular sequence on $A$ if and only if
$$HS_{A/(f_1, \cdots, f_r)}(z) = \pmt{\prod_{i=1}^r(1 - z^{d_i})} HS_A(z).$$
\end{Proposition}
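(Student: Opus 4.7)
The plan is to reduce both statements to the single-element case (a homogeneous element $f$ of degree $d$ acting on a graded standard $K$-algebra $B$), and then induct on $s$. Throughout, the workhorse is the four-term exact sequence
$$0\longrightarrow (0:_B f)_t\longrightarrow B_t\xrightarrow{\cdot f} B_{t+d}\longrightarrow (B/fB)_{t+d}\longrightarrow 0,$$
which yields $HF_{B/fB}(t+d)=HF_B(t+d)-HF_B(t)+\dim_K(0:_B f)_t$ for every $t\geq 0$.

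For the one-element case of part (2), $f$ is a non-zero-divisor on $B$ if and only if $(0:_B f)_t=0$ for all $t$; summing the identity above then gives $HS_{B/fB}(z)=(1-z^d)HS_B(z)$ with no ceiling involved, and conversely this polynomial identity forces each $\dim_K(0:_B f)_t$ to vanish. A straightforward induction on $s$ then finishes (2).

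For the one-element case of part (1), I would show that $f$ is semi-regular on $B$ if and only if $HS_{B/fB}(z)=\lceil(1-z^d)HS_B(z)\rceil$. The forward direction rests on two observations. First, maximal rank of $\cdot f$ at degree $t$ forces $\dim_K(0:_B f)_t=\max\{0,HF_B(t)-HF_B(t+d)\}$, and therefore $HF_{B/fB}(t+d)=\max\{0,HF_B(t+d)-HF_B(t)\}$. Second, because $B$ is generated in degree one over $K$, once $\cdot f$ is surjective at some degree $t$ it stays surjective at every higher degree: indeed $B_{s+d}=B_{s-t}\cdot B_{t+d}=B_{s-t}\cdot fB_t=f\cdot B_s$ for every $s\geq t$. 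These two facts together reproduce exactly the truncation built into the ceiling operator. The reverse direction is purely dimensional: the formula pins down the cokernel dimension to the extremal value $\max\{0,HF_B(t+d)-HF_B(t)\}$, which forces the multiplication map to attain maximal rank at every degree.

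The induction on $s$ for part (1) then requires the ceiling-iteration identity
$$\bigl\lceil(1-z^{d_s})\lceil(1-z^{d_{s-1}})\cdots(1-z^{d_1})HS_A(z)\rceil\bigr\rceil=\bigl\lceil(1-z^{d_s})(1-z^{d_{s-1}})\cdots(1-z^{d_1})HS_A(z)\bigr\rceil,$$
which I would deduce by iterating the one-step version $\lceil(1-z^d)\lceil p(z)\rceil\rceil=\lceil(1-z^d)p(z)\rceil$ for any power series $p$ obtained from a valid Hilbert series by finitely many shift-difference operations. The essential point is that once the coefficients of $(1-z^d)p$ become non-positive at some index $t_0$, the outer ceiling on the right kills everything beyond $t_0$, and the coefficients of $(1-z^d)\lceil p\rceil$ from index $t_0$ onward are also non-positive (they involve only subtracted shifted terms of $p$), so the inner truncation is irrelevant. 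The main obstacle I expect is precisely this formal manipulation of the nonlinear ceiling operator: one has to track sign patterns of alternating sums and rule out the pathology where the inner ceiling truncates strictly earlier than the outer one. Once that lemma is in place, the induction step matches the geometric semi-regularity condition on $A/(f_1,\ldots,f_{s-1})$ with the claimed closed form for $HS_{A/(f_1,\ldots,f_s)}(z)$, and both implications in part (1) follow.
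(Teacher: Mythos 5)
The paper does not actually supply its own proof of this proposition: it is quoted verbatim from Pardue's article, cited as \cite[Proposition 1]{P}, so there is no internal argument to compare against. Your outline is nevertheless a valid route to the statement. The ingredients you identify are the right ones: the four-term exact sequence $0\to(0:_Bf)_t\to B_t\to B_{t+d}\to (B/fB)_{t+d}\to 0$; the observation that maximal rank at each degree gives $HF_{B/fB}(t+d)=\max\{0,HF_B(t+d)-HF_B(t)\}$; the fact that in a standard graded algebra surjectivity of $\cdot f$ persists in all higher degrees, which upgrades the coefficientwise $\max$ to the global ceiling and also supplies the reverse implication; and an induction on $s$ through the iterated-ceiling identity.

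One step in your sketch is stated incorrectly, though, even if the conclusion it serves is true. In justifying $\lceil(1-z^d)\lceil p\rceil\rceil=\lceil(1-z^d)p\rceil$, you assert that from the truncation index of $(1-z^d)p$ onward, all coefficients of $(1-z^d)\lceil p\rceil$ are nonpositive because ``they involve only subtracted shifted terms of $p$''. That reasoning would require $\lceil p\rceil_i=0$ at and beyond that index, which in general is false: if $t_1$ is the truncation index of $(1-z^d)p$ and $t_0$ that of $p$, one always has $t_1\leq t_0$ (because $p_{t_0}-p_{t_0-d}\leq 0$), so $\lceil p\rceil_i=p_i>0$ may persist strictly beyond $t_1$. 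The fix is to only compare coefficients up to and including the truncation point. For $i<t_0$, both $\lceil p\rceil_i=p_i$ and $\lceil p\rceil_{i-d}=p_{i-d}$, so $(1-z^d)p$ and $(1-z^d)\lceil p\rceil$ have identical coefficients there; and at $i=t_0$ both are $\leq 0$. Hence each sequence first becomes nonpositive at the same index $\leq t_0$, and the two ceilings coincide. The ceiling operator only cares about the first nonpositive coefficient, not about all the later ones, and your argument should be phrased accordingly.
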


\vskip 0.3cm
In \cite[Theorem 2]{P}, Pardue also proved that  Conjecture \ref{Pardue E} is equivalent to the following conjecture.

\begin{Conjecture} \cite[Conjecture C]{P}\label{Pardue C}
Let $I = (f_1, \cdots, f_n)$ be a generic homogeneous ideal of type $(n;d_1,\cdots,d_n)$ in $K [x_1,\cdots,x_n]$. Then $x_n, x_{n-1}, \cdots, x_1$ is a semi-regular sequence on $A = K[x_1, \cdots, x_n]/I$.
\end{Conjecture}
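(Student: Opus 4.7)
The plan is to deduce Conjecture \ref{Pardue C} from Conjecture \ref{Pardue E} via the equivalence established in \cite[Theorem 2]{P}, and then to attack Conjecture \ref{Pardue E} by induction on $n$ using the incremental initial-ideal machinery of Section 3. Once Conjecture \ref{Pardue E} is known for $I$, Proposition \ref{semi-regular}(1) translates property $P$ for every minimal generator of $\initial(I)$ into the truncated Hilbert series equalities $HS_{A/(x_n,\ldots,x_s)}(z) = \lceil (1-z)^{n-s+1} HS_A(z) \rceil$ for each $s$, which is precisely semi-regularity of the reverse sequence of variables.

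For the inductive step, given a generic ideal $(I, g)$ of type $(n+1; d_1, \ldots, d_n, d)$, form $J = \pi(I)$ as in Section 2 and assume the inductive hypothesis that Conjecture \ref{Pardue E} holds for $J$. The key tool is Theorem \ref{Pardue E with condition}: under the assumption $d \geq \sigma$, it lifts Conjecture \ref{Pardue E} from $J$ to $(I, g)$. The proof of that theorem rests on the explicit description of $\initial(I, g)$ given in Proposition \ref{Structure initial}, together with Propositions \ref{Part 1} and \ref{Part 2}, which show that when $d \geq \sigma$ every generator of $\initial(I, g)$ not already in $\initial(I)$ is divisible either by $x_n$ or by $z$, and hence automatically inherits property $P$ from $\initial(J)$. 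Iterating this step from $n = 1$ upward therefore yields Conjecture \ref{Pardue E} for every generic complete intersection whose degree sequence keeps $d_i \geq \sigma_{i-1}$ at every stage, which after unwinding is exactly condition (\ref{Condition}).

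The main obstacle is removing the hypothesis $d \geq \sigma$. When $d$ is small compared to $\delta^* = d_1 + \cdots + d_{n-1} - (n-1)$, the newly added generators $\mathbf{B}_{d+i}^{S_i}$ for $1 \leq i \leq i^*$ need not be divisible by $x_n$, and one must then understand which subset $S_i \subset [1, a_{d+i}]$ of columns of the reduction matrix $\Gamma_{i, d+i}$ is linearly independent. The example at the end of Section 3 shows that $S_i$ is \emph{not} simply the $a_i$ largest indices, so a finer analysis of the linear dependencies among the coefficients of $g$ modulo the Gr\"obner basis of $I$ is needed. Overcoming this delicate combinatorial-linear-algebra issue—or devising an entirely different incremental strategy that avoids it—is precisely what would be required to extend the approach to the full Conjecture \ref{Pardue C}, and consequently to Fr\"oberg's Conjecture.
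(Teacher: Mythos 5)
This statement is a conjecture (Pardue's Conjecture C), and the paper does not prove it in full generality; it only establishes the partial result Theorem \ref{Partial Pardue C} under the hypothesis $d_i \geq \sigma_{i-1}$ for $4 \leq i \leq n$. Your proposal correctly recognizes this and reproduces exactly the paper's partial strategy: reduce to Conjecture \ref{Pardue E} via \cite[Theorem 2]{P}, then induct on $n$ using Theorem \ref{Pardue E with condition}, whose proof rests on Propositions \ref{Structure initial}, \ref{Part 2} and \ref{Part 1}; and you correctly identify the genuine obstruction to the full conjecture, namely the case $d < \sigma$, where the index sets $S_i$ selecting the new generators $\mathbf{B}_{d+i}^{S_i}$ are not understood (and, as the last example shows, are not simply the largest $a_i$ indices). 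One minor imprecision: not every new generator outside $\initial(I)$ is divisible by $x_n$ or $z$ --- the generator $\mathbf{B}_d^{\{1\}}$ is an exception, and it satisfies property $P$ for a different reason, namely that it is the largest monomial of $B_d$, so in degrevlex any monomial of degree $d$ in $x_1,\cdots,x_{m-1}$ (with $m = \max(\mathbf{B}_d^{\{1\}})$) is strictly larger and hence lies in $\initial(I)$. Apart from that detail, your account is faithful to the paper; neither you nor the paper proves the conjecture itself, and your assessment of what remains open is accurate.
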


We apply now Theorem \ref{Pardue E with condition} to get partial answers to Conjecture \ref{Pardue E} and Conjecture \ref{Pardue C}. Let $d_1 \leq \cdots \leq d_n$ be $n$ positive integers. For every $1 \leq i \leq n$, we set
\vskip 0.3cm
\hspace{120pt} $\delta_i = d_1 + \cdots + d_{i} - i,$
\vskip 0.3cm
\hspace{120pt} $\sigma_i = \min \left\{ \delta_{i-1} , \left\lfloor \dfrac{\delta_i}{2} \right\rfloor \right\}$ for all $i \geq 2$.
\vskip 0.3cm
\begin{Theorem}\label{Partial Pardue E3}
Let $I = (f_1,\cdots,f_n)$ be a generic homogeneous ideal of type $(n;d_1,\cdots,d_n)$ in $K [x_1,\cdots,x_n]$ with $n \leq 3$ and $d_1 \leq \cdots \leq d_n$. Then, Conjecture \ref{Pardue E} is true for $I$.
\end{Theorem}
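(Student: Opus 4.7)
The plan is to prove Conjecture \ref{Pardue E} for $n \leq 3$ by induction on $n$, using Theorem \ref{Pardue E with condition} as the inductive engine. The base case $n = 1$ is immediate: a generic ideal $I = (f_1)$ in $K[x_1]$ has $\initial(I) = (x_1^{d_1})$, whose unique minimal generator $x_1^{d_1}$ satisfies $\max(x_1^{d_1}) = 1$. Property $P$ then requires checking monomials of degree $d_1$ in the empty family of variables $x_1, \ldots, x_0$, and is therefore vacuously true.

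For the step from $n = 1$ to $n = 2$, I would apply Theorem \ref{Pardue E with condition} with $(I, g) = (f_1, f_2)$ viewed as a generic ideal of type $(1+1; d_1, d_2)$ and $J = \pi(I)$ generic of type $(1; d_1)$ in $K[x_1]$. The invariants of $J$ are $\delta = d_1 - 1$ and $\delta^* = 0$ (empty sum), hence $\sigma = 0$, and the required hypothesis $d_2 \geq \sigma$ is trivial. Since Conjecture \ref{Pardue E} holds for $J$ by the base case, Theorem \ref{Pardue E with condition} yields Conjecture \ref{Pardue E} for $(f_1, f_2)$. For the step from $n = 2$ to $n = 3$, I would apply the same theorem with $g = f_3$ and $J = (\pi(f_1), \pi(f_2))$ generic of type $(2; d_1, d_2)$ in $K[x_1, x_2]$. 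Here $\delta = d_1 + d_2 - 2$ and $\delta^* = d_1 - 1$; the hypothesis $d_2 \geq d_1$ gives $\lfloor (d_1 + d_2 - 2)/2 \rfloor \geq d_1 - 1$, so $\sigma = d_1 - 1$. Since $d_3 \geq d_1 > d_1 - 1$, the hypothesis $d_3 \geq \sigma$ is satisfied, and Theorem \ref{Pardue E with condition} concludes the proof.

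There is no real obstacle in this argument: the genuine content is packaged inside Theorem \ref{Pardue E with condition}, and what remains is routine bookkeeping, namely verifying $d_n \geq \sigma_J$ at each inductive step, where $\sigma_J$ is computed from the smaller ideal $J$. For $n \leq 3$ this inequality is essentially automatic because $\sigma_J \leq d_1 - 1 < d_n$, so no further case analysis or characteristic assumption is required.
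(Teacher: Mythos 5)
Your proof is correct and follows essentially the same route as the paper: reduce to Theorem \ref{Pardue E with condition} after observing that the hypothesis $d_n \geq \sigma$ holds automatically when $n \leq 3$ (since $\sigma \leq d_1 - 1 < d_n$). The only cosmetic difference is that you re-derive the $n \leq 2$ cases inductively from $n=1$ through the same theorem, whereas the paper cites the $n \leq 2$ case as known and carries out only the step to $n=3$.
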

\begin{proof}
It is known that Conjecture \ref{Pardue E} is true in case $n \leq 2$. For $n=3$, we have $J = \pi(I)$ is a generic ideal of type $(2;d_1,d_2)$. Hence, Conjecture \ref{Pardue E} is true for $J$. Since

$$d_3 \geq \sigma_2 = \min \left\{ d_1 - 1 , \left\lfloor \dfrac{d_1 + d_2 - 2}{2} \right\rfloor \right\},$$
\vskip 0.3cm
\noindent by Theorem \ref{Pardue E with condition} we have that Conjecture \ref{Pardue E} is true for $I$.
\end{proof}
\begin{Theorem}\label{Partial Pardue E}
Let $I = (f_1,\cdots,f_n)$ be a generic homogeneous ideal of type $(n;d_1,\cdots,d_n)$ in $K [x_1,\cdots,x_n]$ with $n \geq 4$ and $d_1 \leq \cdots \leq d_n$. If $d_i \geq \sigma_{i-1}$ for all $4 \leq i \leq n$, then Conjecture \ref{Pardue E} is true for $I$.
\end{Theorem}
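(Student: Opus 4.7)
The plan is to prove Theorem \ref{Partial Pardue E} by induction on $n \geq 4$, with Theorem \ref{Pardue E with condition} serving as the inductive step and Theorem \ref{Partial Pardue E3} as the base. The whole argument is essentially an ``unwinding'' of the inductive engine already built in Section 3, so most of the work has already been absorbed by Proposition \ref{Structure initial} and its consequences.

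For the base case $n = 4$, I would regard $I = (f_1, f_2, f_3, f_4)$ inside the framework of Theorem \ref{Pardue E with condition} by setting $I' = (f_1, f_2, f_3)$ and $g = f_4$, after relabeling $x_4$ as $z$. The projection $J = \pi(I')$ is then a generic ideal of type $(3; d_1, d_2, d_3)$ in $K[x_1, x_2, x_3]$, for which Conjecture \ref{Pardue E} holds by Theorem \ref{Partial Pardue E3}. The quantity $\sigma$ defined in Section 2 from the degrees of the three-variable ideal $J$ is exactly $\sigma_3 = \min\{\delta_2, \lfloor \delta_3 / 2 \rfloor\}$, and the hypothesis of the theorem specializes to $d_4 \geq \sigma_3$. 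Thus Theorem \ref{Pardue E with condition} applies and transports Conjecture \ref{Pardue E} from $J$ to $I$.

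For the inductive step, I would assume the statement for $n-1$ and, given $I = (f_1, \ldots, f_n)$ of type $(n; d_1, \ldots, d_n)$ satisfying the hypotheses, relabel $x_n$ as $z$ and view $I$ as $(I', g)$ with $I' = (f_1, \ldots, f_{n-1})$ in $K[x_1, \ldots, x_{n-1}, z]$ and $g = f_n$ of degree $d = d_n$. The truncated sequence $d_1 \leq \cdots \leq d_{n-1}$ inherits the monotonicity and still satisfies $d_i \geq \sigma_{i-1}$ for every $4 \leq i \leq n-1$, so by the inductive hypothesis (or by Theorem \ref{Partial Pardue E3} when $n-1 = 3$), Conjecture \ref{Pardue E} holds for $J = \pi(I')$. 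The $\sigma$ attached to the $(n-1)$-variable ideal $J$ via the definitions of Section 2 is precisely $\sigma_{n-1}$, and the hypothesis $d_n \geq \sigma_{n-1}$ is the $i = n$ case of the condition on the $d_i$'s. A second application of Theorem \ref{Pardue E with condition} then yields Conjecture \ref{Pardue E} for $I$, closing the induction.

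The heart of the argument is packaged in Theorem \ref{Pardue E with condition}, so the only real task is bookkeeping: I would need to verify that the $\sigma$ appearing in the hypothesis of Theorem \ref{Pardue E with condition}, built from the degrees of the smaller ideal, coincides with the indexed $\sigma_{n-1}$ of this section, and that the truncated degree sequence still satisfies the required inequalities. Both checks follow directly from unwinding the definitions $\delta_i = d_1 + \cdots + d_i - i$ and $\sigma_i = \min\{\delta_{i-1}, \lfloor \delta_i / 2 \rfloor\}$, so I do not anticipate a substantive obstacle beyond aligning notations.
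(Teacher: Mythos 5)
Your proposal is correct and follows exactly the same route as the paper: induction on $n$, with Theorem \ref{Partial Pardue E3} handling the base case $n=4$ via the projection to three variables, and Theorem \ref{Pardue E with condition} supplying the inductive step, after identifying the $\sigma$ of that theorem (computed from the degrees of the truncated ideal) with $\sigma_{n-1}$.
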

\begin{proof}
We prove by induction on $n$. For $n=4$, we have $J = \pi(f_1,f_2,f_3)$ is a generic ideal of type $(3;d_1,d_2,d_3)$. By Theorem \ref{Partial Pardue E3}, Conjecture \ref{Pardue E} is true for $J$. Since $d_4 \geq \sigma_3$, by Theorem \ref{Pardue E with condition} we have that Conjecture \ref{Pardue E} is true for $I$.

For $n > 4$, we have $J = \pi(f_1,\cdots,f_{n-1})$ is a generic ideal of type $(n-1;d_1,\cdots,d_{n-1})$ with $d_i \geq \sigma_{i-1}$ for all $4 \leq i \leq n-1$. Hence, by induction Conjecture \ref{Pardue E} is true for $J$. Since, $d_n \geq \sigma_{n-1}$, by Theorem \ref{Pardue E with condition} we have that Conjecture \ref{Pardue E} is true for $I$.
\end{proof}

Since Conjecture \ref{Pardue E} is equivalent to Conjecture \ref{Pardue C}, we also get a partial answer to Conjecture \ref{Pardue C}.

\begin{Corollary}\label{Partial Pardue C}
 Let $I = (f_1,\cdots,f_n)$ be a generic homogeneous ideal of type $(n;d_1,\cdots,d_n)$ in $K [x_1,\cdots,x_n]$ with $d_1 \leq \cdots \leq d_n$. If $n \leq 3$ or $n \geq 4$ and $d_i \geq \sigma_{i-1}$ for all $4 \leq i \leq n$, then $x_n, x_{n-1}, \cdots, x_1$ is a semi-regular sequence on $K[x_1, \cdots, x_n]/I$.
\end{Corollary}

We apply now above results to prove a new partial answer for Fr\"{o}berg's Conjecture.

\begin{Theorem}\label{Partial FB}
Let $I = (f_1,\cdots,f_r)$ be a generic homogeneous ideal of type $(n;d_1,\cdots,d_r)$ in $R = K [x_1,\cdots,x_n]$ with $r \leq n+2$ and $d_1 \leq \cdots \leq d_r$. If $r \leq 3$ or $r \geq 4$ and $d_i \geq \sigma_{i-1}$ for all $4 \leq i \leq r$, then the Hilbert series of $R/I$ is given by
$$HS_{R/I}(z) =  \left\lceil \dfrac{\prod_{i=1}^r(1 - z^{d_i})}{(1-z)^n} \right\rceil.$$
\end{Theorem}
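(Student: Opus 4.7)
The plan is to reduce Fröberg's formula for $R/I$ to the semi-regularity statement of Theorem \ref{Partial Pardue C}, applied in a larger polynomial ring when $r > n$. The case $r \leq n$ will be handled directly using the fact that generic forms form a regular sequence.

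First I would dispose of the easy case $r \leq n$. Here the $r$ generic forms $f_1,\ldots,f_r$ constitute a regular sequence in $R$, so
$$HS_{R/I}(z) = \frac{\prod_{i=1}^r(1-z^{d_i})}{(1-z)^n}.$$
This series has non-negative coefficients, being the product of the Hilbert series of the Artinian complete intersection $K[x_1,\ldots,x_r]/(f_1,\ldots,f_r)$ with $1/(1-z)^{n-r}$. Hence no truncation is needed and Fröberg's formula holds without any hypothesis on the degrees.

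For the case $r > n$, the plan is to lift to $R' = K[x_1,\ldots,x_r]$. Let $I' = (g_1,\ldots,g_r)$ be a generic ideal of type $(r;d_1,\ldots,d_r)$ in $R'$; our hypothesis on the $d_i$ is precisely what is needed to invoke Theorem \ref{Partial Pardue C}, so $x_r,x_{r-1},\ldots,x_1$ is a semi-regular sequence on $A' = R'/I'$. In particular its initial segment $x_r,x_{r-1},\ldots,x_{n+1}$ is semi-regular on $A'$. Since $A'$ is an Artinian complete intersection with Hilbert series $\prod_{i=1}^r(1-z^{d_i})/(1-z)^r$, Proposition \ref{semi-regular}(1) applied to this prefix yields
$$HS_{A'/(x_{n+1},\ldots,x_r)A'}(z) = \left\lceil (1-z)^{r-n}\cdot\dfrac{\prod_{i=1}^r(1-z^{d_i})}{(1-z)^r} \right\rceil = \left\lceil \dfrac{\prod_{i=1}^r(1-z^{d_i})}{(1-z)^n} \right\rceil.$$

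The remaining step is to identify $A'/(x_{n+1},\ldots,x_r)A'$ with $R/J$ for some generic ideal $J$ of type $(n;d_1,\ldots,d_r)$. Setting $x_{n+1}=\cdots=x_r=0$ in each generic form $g_i$ produces a form $\bar g_i$ of degree $d_i$ in $R$ whose coefficients are precisely the subset of the coefficients of $g_i$ indexed by monomials in $x_1,\ldots,x_n$; these are algebraically independent over $F$ because the full set of coefficients of $g_i$ is. Hence $(\bar g_1,\ldots,\bar g_r)$ is a generic ideal of type $(n;d_1,\ldots,d_r)$, so its Hilbert series equals that of $R/I$, and the displayed equation gives Fröberg's formula. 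The routine check in this argument is the specialization step; the substantive content is entirely packaged inside Theorem \ref{Partial Pardue C}, which is the only place where the degree condition \eqref{Condition} is really used.
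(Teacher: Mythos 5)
Your proposal is correct and follows essentially the same route as the paper: pass to $R'=K[x_1,\dots,x_r]$, invoke Theorem~\ref{Partial Pardue C} to get semi-regularity of $x_r,\dots,x_{n+1}$ on the Artinian complete intersection, apply Proposition~\ref{semi-regular}(1), and identify the quotient with $R/I$. The only cosmetic difference is the direction of the lifting step (you define a generic ideal in $R'$ and specialize down, whereas the paper lifts the given $f_i$ to generic $f_i'$ in $R'$), and you spell out the $r\le n$ case via regularity rather than citing it as known; both work equally well.
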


\begin{proof}
Since Fr\"{o}berg's Conjecture is known to be true if $r \leq n$, we only have to consider the case  $r > n$. Let $R' = K[x_1, \cdots, x_r]$ be the polynomial ring in $r$ variables and view $R$ as $R = R'/(x_r, \cdots, x_{n+1})$. Then, there exist the generic homogeneous polynomials $f_1', \cdots, f_r'$ of type $(r; d_1, \cdots, d_r)$ in $R'$ such that $f_i$ is the image of $f_i'$ in $R = R'/(x_r, \cdots, x_{n+1})$. Set $A = R'/(f_1', \cdots, f_r')$. It is known that $A$ is the complete intersection and Hilbert series of $A$ is given by
$$HS_A(z) =  \dfrac{\prod_{i=1}^r(1 - z^{d_i})}{(1-z)^r}.$$

Applying Corollary \ref{Partial Pardue C} for $(f_1', \cdots, f_r')$ we have $x_r, \cdots, x_{n+1}, \cdots, x_1$ is a semi-regular sequence on $A$. By Proposition \ref{semi-regular} we get
$$HS_{A/(x_r, \cdots, x_{n+1})}(z) =  \left\lceil (1-z)^{r-n}HS_A(z) \right\rceil = \left\lceil \dfrac{\prod_{i=1}^r(1 - z^{d_i})}{(1-z)^n} \right\rceil.$$
and the theorem follows from the following isomorphisms.
$$A/(x_r, \cdots, x_{n+1}) \cong R'/(f_1', \cdots, f_r', x_r, \cdots, x_{n+1}) \cong R/(f_1, \cdots, f_r).$$
\end{proof}

\begin{Remark}\label{Remark Lisa} \emph{Let $I = (f_1,\cdots,f_{n+1})$ be a generic ideal of type $(n;d_1,\cdots,d_{n+1})$ in $R=K[x_1,\cdots,x_n]$ with $d_1 \leq d_2 \leq \cdots \leq d_{n+1}$. If the Hilbert series of $R/I$ is given by
$$HS_{R/I}(z) =  \left\lceil \dfrac{\prod_{i=1}^{n+1}(1 - z^{d_i})}{(1-z)^n} \right\rceil,$$
then $\sigma_{n+1}$ is the largest number such that $(R/I)_t$ is non-zero for every $t \leq \sigma_{n+1}$. Hence, in Theorem \ref{Partial FB}, the degree of $f_{n+2}$ should be equal to $\sigma_{n+1}.$  }
\end{Remark}

\noindent \textbf{Acknowledgements}
I thank my advisor Maria Evelina Rossi for suggesting the problem and for providing helpful suggestions throughout the preparation of this manuscript.    I am also grateful to the department of Mathematics of Genova University for supporting my PhD program.   I also thank Lisa Nicklasson for her helpful comments  which brought me to clarify some issues on Theorem \ref{Partial FB}  and contained in  Remark \ref{Remark Lisa}. I would like to thank the referees for their comments which have improved the presentation of this final version.


\end{document}